\newtheorem{thm}{Theorem}[section]
\newtheorem{lem}[thm]{Lemma}
\theoremstyle{definition}
\newtheorem{defn}{Definition}[section]
\newcommand{\scr}[1]{\mathscr #1}
\definecolor{wco}{rgb}{0.5,0.2,0.3}
\numberwithin{equation}{section} \theoremstyle{remark}
\newcommand{\ua}{\uparrow}
\title{{\bf Harnack Inequality for Distribution Dependent Stochastic Hamiltonian System}\footnote{Supported in
 part by  NNSFC (12271398).}
}
\author{
{\bf   Xing Huang $^{a)}$, Xiaochen Ma $^{a)}$    }\\
\footnotesize{ a)Center for Applied Mathematics, Tianjin
University, Tianjin 300072, China}\\
\footnotesize{  xinghuang@tju.edu.cn, }
\footnotesize{   maxiaochen@tju.edu.cn}}
\begin{document}
\allowdisplaybreaks
\def\R{\mathbb R}  \def\ff{\frac} \def\ss{\sqrt} \def\B{\mathbf
B}
\def\N{\mathbb N} \def\kk{\kappa} \def\m{{\bf m}}
\def\ee{\varepsilon}\def\ddd{D^*}
\def\dd{\delta} \def\DD{\Delta} \def\vv{\varepsilon} \def\rr{\rho}
\def\<{\langle} \def\>{\rangle} \def\GG{\Gamma} \def\gg{\gamma}
  \def\nn{\nabla} \def\pp{\partial} \def\E{\mathbb E}
\def\d{\text{\rm{d}}} \def\bb{\beta} \def\aa{\alpha} \def\D{\scr D}
  \def\si{\sigma} \def\ess{\text{\rm{ess}}}
\def\beg{\begin} \def\beq{\begin{equation}}  \def\F{\scr F}
\def\Ric{\text{\rm{Ric}}} \def\Hess{\text{\rm{Hess}}}
\def\e{\text{\rm{e}}} \def\ua{\underline a} \def\OO{\Omega}  \def\oo{\omega}
 \def\tt{\tilde} \def\Ric{\text{\rm{Ric}}}
\def\cut{\text{\rm{cut}}} \def\P{\mathbb P} \def\ifn{I_n(f^{\bigotimes n})}
\def\C{\scr C}   \def\G{\scr G}   \def\aaa{\mathbf{r}}     \def\r{r}
\def\gap{\text{\rm{gap}}} \def\prr{\pi_{{\bf m},\varrho}}  \def\r{\mathbf r}
\def\Z{\mathbb Z} \def\vrr{\varrho} \def\ll{\lambda}
\def\L{\scr L}\def\Tt{\tt} \def\TT{\tt}\def\II{\mathbb I}
\def\i{{\rm in}}\def\Sect{{\rm Sect}}  \def\H{\mathbb H}
\def\M{\scr M}\def\Q{\mathbb Q} \def\texto{\text{o}} \def\LL{\Lambda}
\def\Rank{{\rm Rank}} \def\B{\scr B} \def\i{{\rm i}} \def\HR{\hat{\R}^d}
\def\to{\rightarrow}\def\l{\ell}\def\iint{\int}
\def\EE{\scr E}\def\no{\nonumber}
\def\A{\scr A}\def\V{\mathbb V}\def\osc{{\rm osc}}
\def\BB{\scr B}\def\Ent{{\rm Ent}}\def\3{\triangle}\def\H{\scr H}
\def\U{\scr U}\def\8{\infty}\def\1{\lesssim}\def\HH{\mathrm{H}}
 \def\T{\scr T}
 \def\R{\mathbb R}  \def\ff{\frac} \def\ss{\sqrt} \def\B{\mathbf
B} \def\W{\mathbb W}
\def\N{\mathbb N} \def\kk{\kappa} \def\m{{\bf m}}
\def\ee{\varepsilon}\def\ddd{D^*}
\def\dd{\delta} \def\DD{\Delta} \def\vv{\varepsilon} \def\rr{\rho}
\def\<{\langle} \def\>{\rangle} \def\GG{\Gamma} \def\gg{\gamma}
  \def\nn{\nabla} \def\pp{\partial} \def\E{\mathbb E}
\def\d{\text{\rm{d}}} \def\bb{\beta} \def\aa{\alpha} \def\D{\scr D}
  \def\si{\sigma} \def\ess{\text{\rm{ess}}}
\def\beg{\begin} \def\beq{\begin{equation}}  \def\F{\scr F}
\def\Ric{\text{\rm{Ric}}} \def\Hess{\text{\rm{Hess}}}
\def\e{\text{\rm{e}}} \def\ua{\underline a} \def\OO{\Omega}  \def\oo{\omega}
 \def\tt{\tilde} \def\Ric{\text{\rm{Ric}}}
\def\cut{\text{\rm{cut}}} \def\P{\mathbb P} \def\ifn{I_n(f^{\bigotimes n})}
\def\C{\scr C}      \def\aaa{\mathbf{r}}     \def\r{r}
\def\gap{\text{\rm{gap}}} \def\prr{\pi_{{\bf m},\varrho}}  \def\r{\mathbf r}
\def\Z{\mathbb Z} \def\vrr{\varrho} \def\ll{\lambda}
\def\L{\scr L}\def\Tt{\tt} \def\TT{\tt}\def\II{\mathbb I}
\def\i{{\rm in}}\def\Sect{{\rm Sect}}  \def\H{\mathbb H}
\def\M{\scr M}\def\Q{\mathbb Q} \def\texto{\text{o}} \def\LL{\Lambda}
\def\Rank{{\rm Rank}} \def\B{\scr B} \def\i{{\rm i}} \def\HR{\hat{\R}^d}
\def\to{\rightarrow}\def\l{\ell}\def\iint{\int}
\def\EE{\scr E}\def\Cut{{\rm Cut}}
\def\A{\scr A} \def\Lip{{\rm Lip}}
\def\BB{\scr B}\def\Ent{{\rm Ent}}\def\L{\scr L}
\def\R{\mathbb R}  \def\ff{\frac} \def\ss{\sqrt} \def\B{\mathbf
B}
\def\N{\mathbb N} \def\kk{\kappa} \def\m{{\bf m}}
\def\dd{\delta} \def\DD{\Delta} \def\vv{\varepsilon} \def\rr{\rho}
\def\<{\langle} \def\>{\rangle} \def\GG{\Gamma} \def\gg{\gamma}
  \def\nn{\nabla} \def\pp{\partial} \def\E{\mathbb E}
\def\d{\text{\rm{d}}} \def\bb{\beta} \def\aa{\alpha} \def\D{\scr D}
  \def\si{\sigma} \def\ess{\text{\rm{ess}}}
\def\beg{\begin} \def\beq{\begin{equation}}  \def\F{\scr F}
\def\Ric{\text{\rm{Ric}}} \def\Hess{\text{\rm{Hess}}}
\def\e{\text{\rm{e}}} \def\ua{\underline a} \def\OO{\Omega}  \def\oo{\omega}
 \def\tt{\tilde} \def\Ric{\text{\rm{Ric}}}
\def\cut{\text{\rm{cut}}} \def\P{\mathbb P} \def\ifn{I_n(f^{\bigotimes n})}
\def\C{\scr C}      \def\aaa{\mathbf{r}}     \def\r{r}
\def\gap{\text{\rm{gap}}} \def\prr{\pi_{{\bf m},\varrho}}  \def\r{\mathbf r}
\def\Z{\mathbb Z} \def\vrr{\varrho} \def\ll{\lambda}
\def\L{\scr L}\def\Tt{\tt} \def\TT{\tt}\def\II{\mathbb I}
\def\i{{\rm in}}\def\Sect{{\rm Sect}}  \def\H{\mathbb H}
\def\M{\scr M}\def\Q{\mathbb Q} \def\texto{\text{o}} \def\LL{\Lambda}
\def\Rank{{\rm Rank}} \def\B{\scr B} \def\i{{\rm i}} \def\HR{\hat{\R}^d}
\def\to{\rightarrow}\def\l{\ell}
\def\8{\infty}\def\I{1}\def\U{\scr U}
\maketitle

\begin{abstract} The dimension free Harnack inequality is established for the distribution dependent stochastic Hamiltonian system, where the drift is Lipschitz continuous in the measure variable under the distance induced by the H\"{o}lder-Dini continuous functions, which are $\beta (\beta>\frac{2}{3})$-H\"{o}lder continuous on the degenerate component and square root of Dini continuous on the non-degenerate one. The results extend the existing ones in which the drift is Lipschitz continuous in the measure variable under $L^2$-Wasserstein distance.

\end{abstract} \noindent
 AMS subject Classification:\  60H10, 60H15.   \\
\noindent
 Keywords: Stochastic Hamiltonian system, Distribution dependent SDEs, Harnack inequality, H\"{o}lder continuous, Square root of Dini continuous.
 \vskip 2cm

\section{Introduction}
The stochastic Hamiltonian system,  including the kinetic
Fokker-Planck equation (see \cite{V}), is a classical degenerate model. In \cite{FF} the authors study the regularity of stochastic kinetic equations. \cite{GW} investigates the Bismut formula, gradient estimate and Harnack inequality by using the method of coupling by change of measure. \cite{WZ1} and \cite{Z1} focus on the derivative formula. \cite{W2} proves the the hypercontractivity. One can refer to the references in the papers mentioned above for more related results.

On the other hand, the McKean-Vlasov stochastic differential equations (SDEs), presented in \cite{M}, can be used to characterize the nonlinear Fokker-Planck-Kolmogorov equations. Recently, there are plentiful results on McKean-Vlasov SDEs, such as the well-posedness, Harnack inequality, the Bismut formula, exponential ergodicity, estimate of heat kernel, see \cite{C,CN,HS,HW18,HWJMAA,MV,RW,RZ,FYW1,FYW2,Z5} and references therein for more details. For the well-posedness, the drifts can be Lipschitz continuous in the measure variable under weighted variation distance, for instance \cite{HWJMAA,RZ,FYW2} and so on.  However, with respect to the Harnack inequality, most results concentrate on the case that the coefficients are Lipschitz continuous in the measure variable under $L^2$-Wasserstein distance, see \cite{RW} for the distribution dependent stochastic Hamiltonian system. In fact, for the well-posedness, the initial distributions (initial values) are assumed to be the same, while the Harnack inequality investigates the regularity of the nonlinear semigroup from different initial distributions, which will produce more difficulty than the study of well-posedness.

Quite recently, the first author and his co-author have established the log-Harnack inequality and Bismut derivative formula for non-degenerate McKean-Vlasov SDEs in \cite{HW22a}, where for the log-Harnack inequality, the drifts are only assumed to be Lipschitz continuous under the distance induced by square root of Dini continuous functions, which allows the drifts even being not Dini continuous in the $L^2$-Wasserstein distance.

In this paper, we intend to study the Harnack inequality for distribution dependent stochastic Hamiltonian system, where the drift is Lipschitz continuous in the measure variable under the distance induced by the H\"{o}lder-Dini continuous functions. More precisely, the functions are assumed to be $\beta (\beta>\frac{2}{3})$-H\"{o}lder continuous on the degenerate component and square root of Dini continuous on the non-degenerate one. Compared with \cite{HW22a}, we need to establish the gradient estimate of $P_t^\mu f$ with measure-valued curve parameter $\mu$ in non-degenerate and degenerate components respectively. Moreover, when $f$ only depends on the non-degenerate component, the gradient estimate on the degenerate component of $P_t^\mu f$ is also derived, which is crucial in the proof of the main result, see Theorem \ref{T3.2}(2) below.


Fix $m,d\in\mathbb{N}^{+}$.
Let $\scr P$ be the set of all probability measures in $\R^{m+d}$ equipped with the weak topology.
For $k\geq 1$, define
$$\scr P_k:=\big\{\mu\in \scr P: \|\mu\|_k:=\mu(|\cdot|^k)^{\ff 1 k}<\infty\big\}.$$
$\scr P_k$    is a Polish space under the $L^k$-Wasserstein distance
$$\W_k(\mu,\nu)= \inf_{\pi\in \C(\mu,\nu)} \bigg(\int_{\R^{m+d}\times\R^{m+d}} |x-y|^k \pi(\d x,\d y)\bigg)^{\ff 1 {k}},\ \  $$ where $\C(\mu,\nu)$ is the set of all couplings of $\mu$ and $\nu$.

For any $x\in\R^{m+d}$, let $x^{(1)}$ denote the first $m$ components and $x^{(2)}$ denote the last $d$ components, that is $x=(x^{(1)}, x^{(2)})\in\R^{m+d}$ with $x^{(1)}\in\R^m$ and $x^{(2)}\in\R^d$. Fix $T>0$. Consider the following distribution dependent stochastic Hamiltonian system on $\mathbb{R}^{m+d}$:
\beq\label{E0}
\begin{cases}
\d X_t^{(1)}=M X_t^{(2)}\d t, \\
\d X_t^{(2)}=B_t(X_t,\L_{X_t})\d t+\sigma_t\d W_t,
\end{cases}
\end{equation}
where $W=(W_t)_{t\geq 0}$ is a $d$-dimensional standard Brownian motion with respect to a complete filtration probability space $(\OO, \F, \{\F_{t}\}_{t\ge 0}, \P)$, $M$ is an $m\times d$ matrix, and $\sigma:[0,T]\to \mathbb{R}^{d}\otimes\R^d$, $B:[0,T]\times\R^{m+d}\times \scr P\to\mathbb{R}^d$ are measurable.

Recall that for two probability measures $\mu,\nu\in\scr P$, the entropy and total variation norm are defined as follows:
$$\Ent(\nu|\mu):= \beg{cases} \int_{\R^{m+d}} (\log \ff{\d\nu}{\d\mu})\d\nu, \ &\text{if}\ \nu\ \text{ is\ absolutely\ continuous\ with\ respect\ to}\ \mu,\\
 \infty,\ &\text{otherwise;}\end{cases}$$ and
$$\|\mu-\nu\|_{var} := \sup_{|f|\leq 1}|\mu(f)-\nu(f)|.$$ By Pinsker's inequality (see \cite{Pin}),
\beq\label{ETX} \|\mu-\nu\|_{var}^2\le 2 \Ent(\nu|\mu),\ \ \mu,\nu\in \scr P.\end{equation}

Throughout the paper, we will use $C$ or $c$ as a constant, the values of which may change from one place to another. For a function $f$ on $\R^{m+d}$ and $i=1,2$, let $\nabla^{(i)}f(x)$ stand for the gradient with respect to $x^{(i)}$.

The paper is organized as follows: In Section 2, we state the main results, i.e. the Harnack inequality for distribution dependent stochastic Hamiltonian system and the proof is provided in Section 3; In Section 4, the well-posedness for degenerate McKean-Vlasov SDEs is investigated, where the drifts are assumed to be Lipschitz continuous in the measure variable under the weighted variation distance plus the $L^k$-Wasserstein distance.
\section{Main Results}
Let
$$  \scr A:= \bigg\{\aa: [0,\infty)\to  [0,\infty)\  \text{is\ increasing\ and\ concave,\ } \aa(0)=0,\ \int_0^1 \ff{\aa(r)^2}{r}\d r\in (0,\infty)\bigg\}.$$
For $\beta\in(0,1]$, $\alpha\in\scr A$, define
$$\rho_{\beta,\alpha}(x,y)=|x^{(1)}-y^{(1)}|^\beta+\alpha(|x^{(2)}-y^{(2)}|),\ \ x,y\in\R^{m+d}.$$
 Since $\alpha$ is concave and increasing, we conclude that $\rho_{\beta,\alpha}$ is a distance on $\R^{m+d}$. For a real valued function $f$ on $\R^{m+d}$, let
$$[f]_{\beta,\alpha}:=\sup_{x\neq y} \ff{|f(x)-f(y)|}{\rho_{\beta,\alpha}(x,y)}.$$
Let $$\scr P_{\beta,\aa}:=\big\{\mu\in\scr P:\int_{\R^{m+d}}(|x^{(1)}|^\beta+\aa(|x^{(2)}|))\mu(\d x)<\infty\big\}.$$
Define the Wasserstein distance induced by $\rho_{\beta,\alpha}$:
$$\W_{\beta,\alpha}(\mu,\nu):= \inf_{\pi\in\C(\mu,\nu)}\int_{\R^{m+d}\times \R^{m+d}}\rho_{\beta,\alpha}(x,y)\pi(\d x,\d y),$$
and $\W_{\beta,\alpha}$ is a complete distance on $\scr P_{\beta,\aa}$.
Moreover, we have the dual formula
$$\W_{\beta ,\alpha}(\mu,\nu):= \sup_{[f]_{\beta,\alpha}\leq 1}|\mu(f)-\nu(f)|,\ \ \mu,\nu\in \scr P_{\beta,\aa}.$$
Noting that for any $\mu,\nu\in\scr P_{\beta,\alpha}$, $\{f: f\in\scr B_b(\R^{m+d}), [f]_{\beta,\alpha}\leq 1\}$ is dense in $\{f: [f]_{\beta,\alpha}\leq 1\}$ under $L^1(\mu+\nu)$, we have
$$\W_{\beta ,\alpha}(\mu,\nu):= \sup_{f\in\scr B_b(\R^{m+d}),[f]_{\beta,\alpha}\leq 1}|\mu(f)-\nu(f)|,\ \ \mu,\nu\in \scr P_{\beta,\aa}.$$
Furthermore, it follows from the concavity of $\aa$ and $\alpha(0)=0$ that
\beq\label{AA}\begin{split} 
\aa(rt)\le r\aa(t),\ \ t> 0, r\ge 1,
\end{split}\end{equation}
By \eqref{AA} for $t=1$ and $\alpha(t)\leq \alpha(1), t\in[0,1]$, we conclude that
\begin{align}\label{DDY}\alpha(r)\leq \alpha(1)(1+r),\ \ r\geq 0.\end{align}
So, for any $k\geq 1$,
\begin{equation}\label{kvs}\sup_{[f]_{\beta,\aa}\le 1} |f(x)-f(0)|\le |x^{(1)}|^\beta+\aa(|x^{(2)}|)\le 2(\alpha(1)+1)(1+|x|^k),\ \ x\in\R^{m+d}.
\end{equation}
Therefore $\scr P_k\subset \scr P_{\beta,\aa}$ for $k\ge 1$ and
\begin{align} \label{alw}\ff 1 {2(\alpha(1)+1)} \W_{\beta,\alpha}(\mu,\nu)\le \W_{k,var}(\mu,\nu):=\sup_{|f|\le 1+|\cdot|^k} |\mu(f)-\nu(f)|, \ \ \mu,\nu\in\scr P_k.
\end{align}
To obtain the Harnack inequality, we make the following assumptions.
\begin{enumerate}
\item[\bf{(A1)}] $\sigma_t$ is invertible and $\|\sigma_t\|+\|\sigma_t^{-1}\|$ is bounded in $t\in[0,T]$.
\item[\bf{(A2)}] For any $t\in[0,T],\gamma\in\scr P_2$, $\nabla B_t(\cdot,\gamma)$ is continuous. Moreover, there exist some constant $K_B>0$, $\alpha\in\scr A$ and $\beta\in(\frac{2}{3},1]$ such that
\begin{equation*}\begin{split}
&|\nabla B_t(x,\gamma)|\leq K_B, \\
& |B_t(x,\gamma)-B_t(x,\bar{\gamma})|\leq K_B(\W_2(\gamma,\bar{\gamma})+\W_{\beta,\alpha}(\gamma,\bar{\gamma})),\\
&|B_t(0,\delta_0)|\leq K_B,\ \ t\in[0,T],x\in\mathbb{R}^{m+d},\gamma,\bar{\gamma}\in\scr P_2.
\end{split}\end{equation*}
\item[\bf{(A3)}] $MM^\ast$ is invertible.
\end{enumerate}
By {\bf(A2)} and \eqref{alw} for $k=1$, there exist constants $C_1,C_2>0$ such that
\begin{equation}\label{BBS}\begin{split}|B_t(x,\gamma)|&\leq C_1(1+|x|+\W_2(\gamma,\delta_0)+\W_{\beta,\alpha}(\gamma,\delta_0))\\
&\leq C_2(1+|x|+\|\gamma\|_2),\ \ t\in[0,T],x\in\mathbb{R}^{m+d},\gamma\in\scr P_2.
\end{split}\end{equation}
So, according to  Theorem \ref{GWP} below for $k=2$, under {\bf(A1)}-{\bf(A2)}, \eqref{E0} is well-posed in $\scr P_2$, and $P_t^*{\gamma}:=\L_{X_t^{\gamma}}$ for the solution $X_t^{\gamma}$ to \eqref{E0} with $\L_{X_0^\gamma}=\gamma\in\scr P_2$ satisfy
\begin{align}\label{ga3}\|P_t^\ast\gamma\|_{2}^2\leq C_1(1+\|\gamma\|_2^2),\ \ t\in[0,T]
\end{align}
for some constant $C_1>0$.
Define
$$P_tf(\gamma):= \E[f(X_t^{\gamma})]=\int_{\R^{m+d}} f\d\{P_t^*{\gamma}\}.$$
For simplicity, we denote $X_t^{x}=X_t^{\delta_x}$ and $P_tf(x)=P_tf(\delta_x)$ for $x\in\R^{m+d}$.
The next result characterizes the Harnack inequality for \eqref{E0}.
\begin{thm}\label{LHI} Assume {\bf (A1)}-{\bf (A3)}.
Then the following assertions hold.
\begin{enumerate}
\item[(1)]
There exists a constant $c>0$ such that for any positive
$f\in \B_b(\R^{m+d})$,
\beg{equation*}\beg{split}
P_t\log f(\tilde{\gamma})&\leq\log P_t f(\gamma)+\left(\e^{c  \big(1+\|\gg\|_2+\|\tt\gg\|_2\big)^2}+\frac{c}{t^{3}}\right)\W_2(\gamma,\tilde{\gamma})^2,\ \ t\in (0,T],  \gamma,\tilde{\gamma}\in \scr P_{2}.\end{split}\end{equation*}
Consequently, it holds
\begin{equation*}\begin{split}
\|P_t^\ast\gamma-P_t^\ast\tilde{\gamma}\|_{var}^2&\leq2\mathrm{Ent}(P_t^\ast \gamma|P_t^\ast \tilde{\gamma})\\
&\leq \left(2\e^{c  \big(1+\|\gg\|_2+\|\tt\gg\|_2\big)^2}+\frac{2c}{t^{3}}\right)\W_2(\gamma,\tilde{\gamma})^2,\ \ t\in (0,T],  \gamma,\tilde{\gamma}\in \scr P_{2}.
\end{split}\end{equation*}
 \item[(2)] There exists $c>0$ such that for any $p>1$, $t\in (0,T],  \gamma,\tilde{\gamma}\in \scr P_{2}$ and $f\in\scr B_b^{+}(\R^{m+d})$,
 \beg{equation*}\beg{split}
(P_tf(\tilde{\gamma}))^p&\leq P_t f^p(\gamma)\exp\left\{\frac{cp}{(p-1)}\e^{c  \big(1+\|\gg\|_2+\|\tt\gg\|_2\big)^2}\W_2(\gamma,\tilde{\gamma})^2\right\}\\
&\qquad\quad\times \inf_{\pi\in\C(\gamma,\tilde{\gamma})}\int_{\R^{m+d}\times \R^{m+d}}\exp\left\{\frac{cp}{(p-1)t^3}|x-y|^2\right\}\pi(\d x,\d y).\end{split}\end{equation*}
\item[(3)] If in particular $B$ is bounded, then (1) and (2) hold for some constant $c>0$ replacing $\e^{c  \big(1+\|\gg\|_2+\|\tt\gg\|_2\big)^2}$.
 \end{enumerate}
 \end{thm}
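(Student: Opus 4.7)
The plan is to adapt the coupling-by-change-of-measure method for stochastic Hamiltonian systems (\cite{GW}) to the McKean-Vlasov setting, in the spirit of \cite{HW22a}. Fix $t\in(0,T]$ and set $\nu_s:=P_s^*\gamma$, $\mu_s:=P_s^*\tilde\gamma$. On some filtered probability space let $X$ solve \eqref{E0} with $X_0\sim\gamma$, so that $\L_{X_s}=\nu_s$ and $P_tf(\gamma)=\E f(X_t)$. Starting from an optimal $\W_2$-coupling $(X_0,Y_0)$ of $(\gamma,\tilde\gamma)$, construct a companion process $Y$ on the same space via
\[
\d Y_s^{(1)} = M Y_s^{(2)}\,\d s,\qquad \d Y_s^{(2)} = \bigl(B_s(X_s,\nu_s)+\xi_s\bigr)\,\d s + \sigma_s\,\d W_s,
\]
where $\xi_s$ is affine in $s\in[0,t]$ with two $\R^d$-valued coefficients uniquely determined, via \textbf{(A3)} and $(MM^*)^{-1}$, by the two linear constraints $Y_t^{(1)}=X_t^{(1)}$ and $Y_t^{(2)}=X_t^{(2)}$. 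This is the standard Hamiltonian-type coupling, and a direct computation yields $\int_0^t|\xi_s|^2\,\d s\leq c|X_0-Y_0|^2/t^3$, which is the origin of the $c/t^3$ singularity.

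Next, define
\[
\eta_s := \sigma_s^{-1}\bigl[B_s(Y_s,\mu_s)-B_s(X_s,\nu_s)-\xi_s\bigr],\qquad R_t := \exp\!\Bigl(\int_0^t\langle\eta_s,\d W_s\rangle - \tfrac{1}{2}\int_0^t|\eta_s|^2\,\d s\Bigr),
\]
and pass to $\d\Q:=R_t\,\d\P$. A direct verification shows that, under $\Q$, $Y$ satisfies the decoupled Hamiltonian SDE with drift $B_s(\cdot,\mu_s)$ started at $\tilde\gamma$; since $\mu_s=P_s^*\tilde\gamma$ is the MV-flow issued from $\tilde\gamma$, well-posedness of \eqref{E0} (Theorem \ref{GWP} below) forces $\L_{Y_s\mid\Q}=\mu_s$. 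In particular $P_t\log f(\tilde\gamma)=\E^{\Q}\log f(Y_t)$, and combining this with $X_t=Y_t$ and the classical entropy (Young) inequality $\E[R_t\log g]\leq \log\E g+\E^\Q\log R_t$ gives
\[
P_t\log f(\tilde\gamma) \;\leq\; \log P_tf(\gamma) + \tfrac{1}{2}\,\E^{\Q}\!\int_0^t|\eta_s|^2\,\d s,
\]
which reduces (1) to a control of $\E^{\Q}\int_0^t|\eta_s|^2\,\d s$. Assertion (2) is obtained from the same coupling by using H\"older on $P_tf(\tilde\gamma)=\E[R_t f(X_t)]$ to get $(P_tf(\tilde\gamma))^p\leq (\E R_t^q)^{p-1}P_tf^p(\gamma)$ with $q=p/(p-1)$; a Cauchy-Schwarz splitting of $|\eta|^2$ into the $\xi$-piece (pathwise of order $|X_0-Y_0|^2/t^3$) and the measure-flow piece separates the $\inf_{\pi}\!\int\exp(\cdots|x-y|^2)\,\pi$-factor from the $\exp\{(cp/(p-1))\e^{c(\cdots)^2}\W_2^2\}$-factor.

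For the estimate of $\E^{\Q}\int_0^t|\eta_s|^2\,\d s$, assumption \textbf{(A2)} gives
\[
|\eta_s|^2 \;\leq\; C\bigl(|X_s-Y_s|^2 + \W_2(\mu_s,\nu_s)^2 + \W_{\beta,\alpha}(\mu_s,\nu_s)^2 + |\xi_s|^2\bigr).
\]
The first summand is pathwise controlled by $|X_0-Y_0|^2$ because $Y$ is designed to hit $X$ at time $t$, and the last integrates to at most $c|X_0-Y_0|^2/t^3$. The two middle summands, measuring the separation of the marginal flows, are handled via an auxiliary synchronous coupling of two MV-solutions starting from the same $\W_2$-optimal coupling of $(\gamma,\tilde\gamma)$; Gronwall, together with the growth bound \eqref{BBS} and the gradient estimates on $P_s^{\mu}f$ in both the non-degenerate and degenerate components from Theorem \ref{T3.2}(2) below, yield
\[
\int_0^t\!\bigl(\W_2(\mu_s,\nu_s)^2 + \W_{\beta,\alpha}(\mu_s,\nu_s)^2\bigr)\,\d s \;\leq\; \e^{c(1+\|\gamma\|_2+\|\tilde\gamma\|_2)^2}\,\W_2(\gamma,\tilde\gamma)^2.
\]
Transferring this bound from $\E$ to $\E^{\Q}$ uses Gronwall under the tilted measure together with the moment bound \eqref{ga3}; this step is the source of the factor $\e^{c(1+\|\gamma\|_2+\|\tilde\gamma\|_2)^2}$. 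Under the boundedness hypothesis of (3), the Gronwall constants become moment-free and the exponential collapses to a numerical constant.

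The main obstacle is the $\W_{\beta,\alpha}$-term in $|\eta|^2$: a naive Jensen estimate on $\W_{\beta,\alpha}(\mu_s,\nu_s)$ only delivers the sub-linear bound $\W_2^{\beta}+\alpha(\W_2)$, which is incompatible with the $\W_2^2$-scaling on the right-hand side of (1)-(2). Closing this gap relies precisely on the refined gradient estimate on the degenerate component of $P_t^{\mu}f$ supplied by Theorem \ref{T3.2}(2), together with the threshold hypothesis $\beta>\tfrac{2}{3}$, which is exactly what balances the $1/t^3$-singularity of the Hamiltonian coupling against the sub-linear H\"older-Dini modulus on the drift. I expect most of the technical effort to go into making this balance rigorous.
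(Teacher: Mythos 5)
Your coupling construction (additive control $\xi_s$ affine in $s$ forcing $Y_t=X_t$, Girsanov with $\eta_s=\sigma_s^{-1}[B_s(Y_s,\mu_s)-B_s(X_s,\nu_s)-\xi_s]$, Young's entropy inequality for (1), H\"older for (2)) is essentially the paper's final step, and your reduction of the theorem to the bound
$\int_0^t\{\W_2(P_s^*\gamma,P_s^*\tilde\gamma)^2+\W_{\beta,\alpha}(P_s^*\gamma,P_s^*\tilde\gamma)^2\}\,\d s\le \e^{c(1+\|\gamma\|_2+\|\tilde\gamma\|_2)^2}\W_2(\gamma,\tilde\gamma)^2$
is exactly the right reduction (this is Lemma \ref{DLP} combined with \eqref{WAr}). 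But that bound is precisely the heart of the proof, and you do not prove it: you only name the obstacle (the naive estimate $\W_{\beta,\alpha}\le\W_2^\beta+\alpha(\W_2)$ is sublinear and cannot give $\W_2^2$-scaling) and assert that "an auxiliary synchronous coupling, Gronwall, and the gradient estimates of Theorem \ref{T3.2}(2)" will close it. As stated, that route fails: a synchronous coupling only controls $\E\rho_{\beta,\alpha}(X_s,Y_s)$, which delivers exactly the sublinear bound you yourself identify as insufficient, and the gradient estimates of Theorem \ref{T3.2} cannot be injected into a pathwise synchronous-coupling Gronwall argument at all --- they are estimates on $x\mapsto P_t^{\mu}f(x)$, not on trajectories.

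The paper's mechanism is genuinely different and is the missing idea. One interpolates the initial laws, $\gamma^\vv=\L_{X_0^\gamma+\vv(X_0^{\tilde\gamma}-X_0^\gamma)}$, and estimates $\W_{\beta,\alpha}(P_t^*\gamma^{\vv+r},P_t^*\gamma^\vv)^2\le I_1(t)+I_2(t)$, where $I_1$ compares $P_t^{\gamma^{\vv+r}}f$ with $P_t^{\gamma^\vv}f$ by a Girsanov change of measure in the frozen (decoupled) equation, and $I_2=|\gamma^{\vv+r}(P_t^{\gamma^\vv}f)-\gamma^\vv(P_t^{\gamma^\vv}f)|^2$ is handled by differentiating $P_t^{\gamma^\vv}f$ in the initial point via the Bismut formula. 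Crucially, for $[f]_{\beta,\alpha}\le1$ one subtracts $f(x_0^{(1)}+tMx_0^{(2)},\cdot)$ and uses the short-time moments $\E|(X_t)^{(1)}-x^{(1)}-tMx^{(2)}|^p\lesssim t^{3p/2}$, $\E|(X_t)^{(2)}-x^{(2)}|^p\lesssim t^{p/2}$ together with Theorem \ref{T3.2}(2) to get $|\nabla^{(1)}P_t^{\gamma^\vv}f|\lesssim t^{3(\beta-1)/2}$ and $|\nabla^{(2)}P_t^{\gamma^\vv}f|\lesssim t^{(3\beta-1)/2}+\alpha(t^{1/2})t^{-1/2}$; these are square-integrable near $t=0$ exactly because $\beta>\tfrac23$ and $\int_0^1\alpha(r)^2r^{-1}\d r<\infty$. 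A Gronwall argument in $\Gamma_t(\vv,r)=\int_0^t\W_{\beta,\alpha}(P_s^*\gamma^\vv,P_s^*\gamma^{\vv+r})^2\d s$, the limit $r\downarrow0$, and integration of the resulting Lipschitz bound over $\vv\in[0,1]$ then give $\W_{\beta,\alpha}(P_t^*\gamma,P_t^*\tilde\gamma)\lesssim\W_2(\gamma,\tilde\gamma)$ with the stated constants, after which \eqref{WAr} upgrades this to the $\W_2$-stability of the flow. None of this interpolation-plus-Bismut structure appears in your proposal, so the proof of the theorem is incomplete at its decisive step. (A minor further inaccuracy: since the flow distances are deterministic, no "transfer from $\E$ to $\E^\Q$" is needed; the exponential factor arises inside the Gronwall argument for the flow, not from the change of measure.)
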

 \section{Proof of Theorem \ref{LHI}}
 \subsection{The Bismut formula for stochastic Hamiltonian system with measure-valued curve parameter}
 The Bismut formula in the first assertion of the following theorem has been established in \cite{GW,Wbook,WZ1,Z1}, where the Malliavin calculus or coupling by change of measure plays an important role. For reader's convenience, we will use the coupling by change of measure to provide the proof. The second assertion in Theorem \ref{T3.2} below is new, which is crucial in the proof of Lemma \ref{I2t} below.

 For any $\mu\in C([0,T];\scr P_2)$, consider  the SDE with parameter $\mu$:
\begin{align}\label{DDT}\begin{cases}
\d (X^{x,\mu}_t)^{(1)}=M(X^{x,\mu}_t)^{(2)}\d t, \\
\d (X^{x,\mu}_t)^{(2)}=B_t(X^{x,\mu}_t,\mu_t)\d t+\sigma_t\d W_t,\ \ X^{x,\mu}_0=x\in\R^{m+d}.
\end{cases}
\end{align}
Observe that \eqref{DDT} is indeed a time non-homogeneous classical SDE. By {\bf (A1)}-{\bf (A2)} and \eqref{kvs} for $k=1$, \eqref{DDT} is well-posed. In fact, by {\bf (A2)} and \eqref{kvs} for $k=1$, there exists a constant $C>0$ such that
\begin{equation}\begin{split}\label{LPL}&|B_t(x,\mu_t)-B_t(y,\mu_t)|\leq C|x-y|,\\
 &|B_t(0,\mu_t)|\leq C(1+\|\mu_t\|_2),\ \ x,y\in\R^{m+d}, t\in[0,T].
\end{split}\end{equation}
Let $P_t^\mu$ be the associated Markov semigroup, i.e.
$$P_t^\mu f(x):=\E[f(X_t^{x,\mu})],\ \ t\in [0,T], x\in \R^{m+d}, f\in \B_b(\R^{m+d}).$$
\begin{thm}\label{T3.2} Assume {\bf (A1)}-{\bf(A2)}.
Then the following assertions hold.
\begin{enumerate}
\item[(1)] Suppose that {\bf(A3)} holds. Then for any $f\in\scr B_b(\R^{m+d})$, it holds
\begin{align}\label{GRm}\nabla_{h}P_t^\mu f(x)=\E [f(X_t^{x,\mu})N_t(h)],\ \ x, h\in\R^{m+d}, t\in(0,T]
\end{align}
with
$$N_t(h)=\int_0^t\left\<\sigma^{-1}_s\left[\nabla B_s(X_s^{x,\mu},\mu_s)\left(h^{(1)}+\int_0^sM\gamma_u(h),\ \ \gamma_s(h)\right)-\gamma_s'(h)\right],\d W_s\right\>$$
and \begin{equation}\begin{split}\label{gah}\gamma_s(h)&:=  \left[\ff{(t-s)}{t}-\ff{3s(t-s)}{t^2}M^*(MM^*)^{-1}M \right]h^{(2)}\\
&\qquad\quad-\ff{6s(t-s)}{t^3}M^*(MM^*)^{-1} h^{(1)}, \ \ s\in[0,t]
\end{split}\end{equation}
satisfying
$$|\gamma_s(h)|\leq c(|h^{(2)}|+t^{-1}|h^{(1)}|),\ \ |\gamma'_s(h)|\leq c(t^{-1}|h^{(2)}|+t^{-2}|h^{(1)}|),\ \ s\in[0,t]$$
for some constant $c>0$.
\item[(2)] For any $f\in\scr B_b(\R^{m+d})$ with $f(x)$ only depending on $x^{(2)}$ and $v\in\R^m$,
\begin{align}\label{NA1}\nabla^{(1)}_vP_t^\mu f(x)=\E \left[f((X_t^{x,\mu})^{(2)})\int_0^t\<\sigma^{-1}_s\nabla ^{(1)}_v B_s(X_s^{x,\mu},\mu_s), \d W_s\>\right],\ \ t\in[0,T].
\end{align}
Consequently, for any $f\in\scr B_b(\R^{m+d})$ with $f(x)$ only depending on $x^{(2)}$,
\begin{align}\label{N12}|\nabla^{(1)}P_t f(x)|\leq C\left(\E |f((X_t^{x,\mu})^{(2)})|^2\right)^{\frac{1}{2}}t^{\frac{1}{2}},\ \ t\in[0,T]
\end{align}
for some constant $C>0$.
\end{enumerate}
\end{thm}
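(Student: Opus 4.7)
The plan is to prove both assertions by the coupling by change of measure method, with the coupling tailored to what is being differentiated. In each case I construct, for small $\vv>0$, a process $Y_s^\vv$ with $Y_0^\vv=x+\vv h$ whose law under a new measure $\Q_\vv=R_t^\vv\P$ (given by Girsanov) coincides with that of $X_s^{x+\vv h,\mu}$ under $\P$, but such that $Y_t^\vv$ equals $X_t^{x,\mu}$ on the coordinates that $f$ actually sees. Then $P_t^\mu f(x+\vv h)=\E[R_t^\vv f(Y_t^\vv)]=\E[R_t^\vv f(X_t^{x,\mu})]$, and differentiating at $\vv=0$ yields the derivative formula.

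For part (1), following \cite{GW,WZ1,Z1}, I set $Y_s^\vv:=X_s^{x,\mu}+\vv\zeta_s$ with $\zeta_s:=\bigl(h^{(1)}+\int_0^s M\gamma_u(h)\,\d u,\,\gamma_s(h)\bigr)$. A direct calculation from \eqref{gah} shows $\zeta_0=h$, $\gamma_t(h)=0$, and $\int_0^t M\gamma_u(h)\,\d u=-h^{(1)}$, so $\zeta_t=0$ and $Y_t^\vv=X_t^{x,\mu}$; the identity $\d Y_s^{\vv,(1)}=MY_s^{\vv,(2)}\,\d s$ holds automatically by the choice of $\zeta^{(1)}$. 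In the second coordinate
\begin{equation*}
\d Y_s^{\vv,(2)}=B_s(Y_s^\vv,\mu_s)\,\d s+\sigma_s\bigl(\d W_s+\sigma_s^{-1}[B_s(X_s^{x,\mu},\mu_s)-B_s(Y_s^\vv,\mu_s)+\vv\gamma_s'(h)]\,\d s\bigr),
\end{equation*}
so Girsanov applied to the bracketed term produces $\Q_\vv$ and $R_t^\vv$, and weak uniqueness of \eqref{DDT} gives the identity stated above. Computing $\pp_\vv\log R_t^\vv|_{\vv=0}$ using $R_t^0=1$ reproduces the stated $N_t(h)$, and the pointwise bounds on $\gamma_s(h)$ and $\gamma_s'(h)$ are elementary from \eqref{gah}.

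For part (2), since $f$ depends only on $x^{(2)}$ and the perturbation is in the $\R^m$-direction only, the trivial coupling $Y_s^\vv:=X_s^{x,\mu}+(\vv v,0)$ already satisfies $Y_s^{\vv,(2)}=X_s^{x,\mu,(2)}$ for all $s\in[0,t]$, so $\d Y_s^{\vv,(1)}=MY_s^{\vv,(2)}\,\d s$ holds trivially and
\begin{equation*}
\d Y_s^{\vv,(2)}=B_s(Y_s^\vv,\mu_s)\,\d s+\sigma_s\bigl(\d W_s-\sigma_s^{-1}[B_s(Y_s^\vv,\mu_s)-B_s(X_s^{x,\mu},\mu_s)]\,\d s\bigr).
\end{equation*}
Girsanov and weak uniqueness yield $P_t^\mu f(x+(\vv v,0))=\E[R_t^\vv f(Y_t^{\vv,(2)})]=\E[R_t^\vv f((X_t^{x,\mu})^{(2)})]$. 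Differentiating at $\vv=0$ and using $\pp_\vv B_s(X_s^{x,\mu}+(\vv v,0),\mu_s)|_{\vv=0}=\nn^{(1)}_v B_s(X_s^{x,\mu},\mu_s)$ yields \eqref{NA1}; then \eqref{N12} follows by Cauchy--Schwarz and It\^o's isometry applied to \eqref{NA1}, using $\|\sigma^{-1}\|\le C$ from {\bf (A1)} and $|\nn B|\le K_B$ from {\bf (A2)}.

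The principal technical obstacle in both parts is interchanging $\pp_\vv$ with $\E[\cdot]$, i.e., showing $\vv^{-1}(R_t^\vv-1)\to\pp_\vv R_t^\vv|_{\vv=0}$ in some $L^q$ with $q>1$. In part (1) the Girsanov integrand decomposes as $\vv\gamma_s'(h)$ (non-random and in $L^2([0,t])$ by \eqref{gah}) plus $B_s(Y_s^\vv,\mu_s)-B_s(X_s^{x,\mu},\mu_s)=O(\vv)$ (by $|\nn B|\le K_B$); together with $\|\sigma^{-1}\|\le C$, Novikov's condition holds uniformly in small $\vv$, and the continuity of $\nn B(\cdot,\gamma)$ from {\bf (A2)} plus dominated convergence close the argument. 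In part (2) the corresponding estimates are easier because the extra drift is uniformly $O(\vv)$ with no time singularity.
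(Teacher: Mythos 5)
Your proposal is correct and follows essentially the same route as the paper: the same perturbed process $X_s^{x,\mu}+\vv\bigl(h^{(1)}+\int_0^s M\gamma_u(h)\,\d u,\ \gamma_s(h)\bigr)$ built from \eqref{gah} so that it returns to $X_t^{x,\mu}$ at time $t$, the same Girsanov change of measure together with weak uniqueness of \eqref{DDT} to identify the laws, differentiation of the Radon--Nikodym density at $\vv=0$ (justified by uniform integrability/dominated convergence as in the paper), and for part (2) the same trivial shift $(\vv v,0)$ followed by Cauchy--Schwarz and the It\^o isometry to get \eqref{N12}.
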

 \begin{proof} (1) Fix $t\in(0,T]$.
For any $\vv\in(0,1)$, $h\in\R^{m+d}$, let $(X_s^\vv)_{s\in[0,t]}$ solve the equation
\beq\label{EC10} \beg{cases} \d (X_s^\vv)^{(1)}= M(X_s^\vv)^{(2)}\d s,\\
\d (X_s^\vv)^{(2)}= B_s(X_s^{x,\mu},\mu_s)\d s +\si_s\d W_s+ \vv \gamma'_s(h)\d s,\ \ X_0^\vv= x+\vv h.\end{cases}
\end{equation}
Then it is easy to see that
\beq\label{EE} X^\vv_s=X^{x,\mu}_s+\left(\vv h^{(1)}+\vv\int_0^{s}  M\gamma_u(h)\d u,\ \ \vv\gamma_s(h)\right),\ \
s\in[0,t],\end{equation}
in particular, $X^\vv_t=X^{x,\mu}_t$ due to \eqref{gah}.
Let
$$
\Phi^\vv_s=\sigma_s^{-1}[B_s(X^\vv_s,\mu_s)-B_s(X^{x,\mu}_s,\mu_s)-\vv\gamma'_s(h)],\ \ s\in[0,t].
$$
Then {\bf(A1)}-{\bf(A2)} and \eqref{EE} imply
\begin{align}\label{DDW}
|\Phi^\vv_s|\leq c_0\left[\varepsilon|h^{(1)}|+\vv\|M\|\int_0^s|\gamma_u(h)|\d u+\vv|\gamma_s(h)|\right]+\vv|\gamma'_s(h)|,\ \ s\in[0,t]
\end{align}
for some constant $c_0>0$.
In view of
\begin{align}\label{gammh}|\gamma_s(h)|\leq c(|h^{(2)}|+t^{-1}|h^{(1)}|),\ \ |\gamma'_s(h)|\leq c(t^{-1}|h^{(2)}|+t^{-2}|h^{(1)}|),\ \ s\in[0,t]
\end{align}
for some constant $c>0$, it follows from Girsanov's theorem that
$$
\tilde{W}_s:=W_s-\int_{0}^{s}\Phi^\vv_u\d u
,\ \ s\in[0,t]$$
is a $d$-dimensional Brownian motion on $[0,t]$ under $\Q_t^\vv=R^\vv_t\P$, where
\begin{align*}
R^\vv_t=\exp\bigg[\int_0^t\<\Phi^\vv_u, \d
W_u\>-\frac{1}{2}\int_0^t |\Phi^\vv_u|^2\d u\bigg].
\end{align*}
Then (\ref{EC10}) reduces to
\begin{equation*}  \beg{cases} \d (X^\vv_s)^{(1)}= M(X^\vv_s)^{(2)}\d s,\\
\d (X^\vv_s)^{(2)}=B_s(X^\vv_s,\mu_s)\d s +\si_s\d \tilde{W}_s,\ \ X_0^\vv= x+\vv h,
\end{cases}
\end{equation*}
which yields that the law of $X_t^\vv$ under $\Q^\vv_t$ coincides with that of $X^{x+\vv h,\mu}_t$  under $\P$.
As a result, we get
\begin{align*}
P_t^\mu f(x+\vv h)&=\E^{\Q^\vv_t} f(X_t^\vv)=\E ^{\Q^\vv_t}f(X^{x,\mu}_t)=\E [R^\vv_tf(X^{x,\mu}_t)],\ \ f\in\scr B_b(\R^{m+d}).
\end{align*}
Due to \eqref{DDW} and \eqref{gammh}, it is not difficult to verify that $\{\frac{R^\vv_t-1}{\vv}\}_{\vv\in(0,1)}$ is uniformly integrable and hence applying the dominated convergence theorem, {\bf(A1)}-{\bf(A3)} and \eqref{EE},
we have
$$\lim_{\vv\to0}\E\left|\frac{R^\vv_t-1}{\vv}-N_t(h)\right|=0,$$
which derives \eqref{GRm}.
This combined with \eqref{gammh} completes the proof.

(2) For any $x\in\R^{m+d}, v\in\R^m$, let $\hat{X}_t^\vv=((X_t^{x,\mu})^{(1)}+\vv v,(X_t^{x,\mu})^{(2)})$. Then it is clear that
\begin{equation*}\beg{cases} \d (\hat{X}_s^\vv)^{(1)}= M(\hat{X}_s^\vv)^{(2)}\d s,\\
\d (\hat{X}_s^\vv)^{(2)}= B_s(X_s^{x,\mu},\mu_s)\d s +\si_s\d W_s,\ \ \hat{X}_0^\vv=(x^{(1)}+\vv v,x^{(2)}).\end{cases}
\end{equation*}
Rewrite it as
\beq\label{ECv} \beg{cases} \d (\hat{X}_s^\vv)^{(1)}= M(\hat{X}_s^\vv)^{(2)}\d s,\\
\d (\hat{X}_s^\vv)^{(2)}= B_s(\hat{X}_s^\vv,\mu_s)\d s +\si_s\d \hat{W}_s,\ \ \ \ \hat{X}_0^\vv=(x^{(1)}+\vv v,x^{(2)}),\end{cases}
\end{equation}
with
$$\d \hat{W}_s=\d W_s-\eta_s^\vv\d s,\ \ \eta_s^\vv=\sigma_s^{-1}[B_s(\hat{X}_s^\vv,\mu_s)-B_s(X_s^{x,\mu},\mu_s)],\ \ s\in[0,t].$$
Then {\bf(A1)}-{\bf(A2)} gives
\begin{align}\label{DDN}|\eta_s^\vv|\leq \sup_{s\in[0,T]}\|\sigma^{-1}_s\|K_B\vv|v|,\ \ s\in[0,t].
\end{align}
Let $$
\hat{R}^\vv_t=\exp\bigg[\int_0^t\< \eta_u^\vv, \d
W_u\>-\frac{1}{2}\int_0^t |\eta^\vv_u|^2\d u\bigg].
$$
Girsanov's theorem yields that $(\hat{W}_s)_{s\in[0,t]}$ is a $d$-dimensional Brownian motion under $\hat{\Q}^\vv_t=\hat{R}^\vv_t\P$ and so \eqref{ECv} implies that the law of $\hat{X}_t^\vv$ under $\hat{\Q}^\vv_t$ coincides with that of $X^{(x^{(1)}+\vv v, x^{(2)}),\mu}_t$  under $\P$, which together with $(\hat{X}_t^\vv)^{(2)}=(X^{x,\mu}_t)^{(2)}$ yields that for any $f\in\scr B_b(\R^{m+d})$ with $f(x)$ only depending on $x^{(2)}$,
\begin{align*}
P_t^\mu f(x^{(1)}+\vv v, x^{(2)})&=\E^{\hat{\Q}^\vv_t} f((\hat{X}_t^\vv)^{(2)})=\E ^{\hat{\Q}^\vv_t}f((X^{x,\mu}_t)^{(2)})=\E [\hat{R}^\vv_tf((X^{x,\mu}_t)^{(2)})].
\end{align*}
Similarly to (1), by \eqref{DDN}, one may verify that $\{\frac{\hat{R}^\vv_t-1}{\vv}\}_{\vv\in(0,1)}$ is uniformly integrable, which together with the dominated convergence theorem and {\bf(A1)}-{\bf(A2)} yields
$$\lim_{\vv\to0}\E\left|\frac{\hat{R}^\vv_t-1}{\vv}-\int_0^t\<\sigma^{-1}_s\nabla_v^{(1)} B_s(X_s^{x,\mu},\mu_s), \d W_s\>\right|=0,$$
 This implies \eqref{NA1}. Finally, \eqref{N12} follows from \eqref{NA1}, Cauchy-Schwarz's inequality and {\bf (A1)}-{\bf (A2)}.
 \end{proof}
\subsection{Proof of Theorem \ref{LHI}}
For any $\gg\in \scr P_2$, consider  the   decoupled SDE:
\begin{align}\label{dec}\begin{cases}
\d (X^{x,\gamma}_t)^{(1)}=M(X^{x,\gamma}_t)^{(2)}\d t, \\
\d (X^{x,\gamma}_t)^{(2)}=B_t(X^{x,\gamma}_t,P_t^\ast\gamma)\d t+\sigma_t\d W_t,\ \ X^{x,\gamma}_0=x\in\R^{m+d}.
\end{cases}
\end{align}
\eqref{ga3} together with {\bf (A2)} implies that \eqref{LPL} holds for $\mu_t=P_t^\ast \gamma$, so that SDE \eqref{dec} is well-posed and it is standard to derive that for any $p\geq 1$, there exists a constant $C>0$ such that
\begin{align}\label{sus}\E\sup_{t\in[0,T]}|X^{x,\gamma}_t|^p\leq C(1+|x|^p+\|\gamma\|_2^p).
\end{align}
Let $P_t^\gg$ be the associated Markov semigroup to \eqref{dec}, i.e.
$$P_t^\gg f(x):=\E[f(X_t^{x,\gg})],\ \ t\in [0,T], x\in \R^{m+d}, f\in \B_b(\R^{m+d}).$$
Then it holds
$$P_t f(\gamma):=\int_{\R^{m+d}} f(x)(P_t^*\gg)(\d x) =\int_{\R^{m+d}}P^{\gamma}_t f(x)\gamma(\d x),\ \ f\in\scr B_b(\R^{m+d}).$$
\begin{lem}\label{poa} Assume {\bf (A1)}-{\bf (A2)}.
Then for any $p\geq 1$, there exists a constant $c_p>0$ such that
\begin{equation}\begin{split}\label{soi}
&\E|(X^{x,\gamma}_t)^{(1)}-x^{(1)}-tM x^{(2)}|^p\\
&\leq c_p(1+|x|^p+\|\gg\|_2^p)t^{\frac{3p}{2}},\ \ t\in[0,T], x\in\R^{m+d}, \gg\in \scr P_2,
\end{split}\end{equation}
\begin{align}\label{sos}
\E\sup_{s\in[0,t]}|(X^{x,\gamma}_s)^{(2)}-x^{(2)}|^p\leq c_p(1+ |x|^p+\|\gg\|_2^p)t^{\frac{p}{2}},\ \ t\in[0,T], x\in\R^{m+d}, \gg\in \scr P_2.
\end{align}
If $B$ is bounded, then
\begin{align}\label{soi01}
\E|(X^{x,\gamma}_t)^{(1)}-x^{(1)}-tM x^{(2)}|^p\leq c_pt^{\frac{3p}{2}},\ \ t\in[0,T], x\in\R^{m+d}, \gg\in \scr P_2,
\end{align}
\begin{align}\label{sos01}
\E\sup_{s\in[0,t]}|(X^{x,\gamma}_s)^{(2)}-x^{(2)}|^p\leq c_pt^{\frac{p}{2}},\ \ t\in[0,T], x\in\R^{m+d}, \gg\in \scr P_2.
\end{align}
\end{lem}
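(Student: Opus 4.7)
The key observation is that (\ref{soi}) is a direct consequence of (\ref{sos}): integrating the degenerate component of \eqref{dec} gives
\[
(X^{x,\gamma}_t)^{(1)}-x^{(1)}-tMx^{(2)}=\int_0^t M\big((X^{x,\gamma}_s)^{(2)}-x^{(2)}\big)\d s,
\]
so Jensen's inequality yields
\[
\big|(X^{x,\gamma}_t)^{(1)}-x^{(1)}-tMx^{(2)}\big|^p\le \|M\|^p t^{p-1}\int_0^t \big|(X^{x,\gamma}_s)^{(2)}-x^{(2)}\big|^p\d s,
\]
and then (\ref{sos}) together with $\int_0^t s^{p/2}\d s=\tfrac{2}{p+2}t^{p/2+1}$ produces the $t^{3p/2}$ factor. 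Exactly the same reduction handles (\ref{soi01}) from (\ref{sos01}). Thus the whole problem reduces to proving (\ref{sos}) and its bounded-drift counterpart.

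\textbf{Proof of (\ref{sos}).} Integrating the non-degenerate equation gives
\[
(X^{x,\gamma}_s)^{(2)}-x^{(2)}=\int_0^s B_r(X_r^{x,\gamma},P_r^*\gamma)\d r+\int_0^s \sigma_r\d W_r,
\]
and I will estimate the two terms separately. For the stochastic integral, the Burkholder--Davis--Gundy inequality together with the uniform boundedness of $\|\sigma_r\|$ from \textbf{(A1)} yields
\[
\E\sup_{s\in[0,t]}\Big|\int_0^s \sigma_r\d W_r\Big|^p\le c_p\Big(\int_0^t \|\sigma_r\|^2\d r\Big)^{p/2}\le c_p t^{p/2}.
\]
For the drift term I combine \eqref{BBS}, the propagation bound \eqref{ga3}, and the a priori moment estimate \eqref{sus}:
\[
\E|B_r(X_r^{x,\gamma},P_r^*\gamma)|^p\le C\big(1+\E|X_r^{x,\gamma}|^p+\|P_r^*\gamma\|_2^p\big)\le C\big(1+|x|^p+\|\gamma\|_2^p\big),
\]
uniformly in $r\in[0,T]$. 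Hence by H\"older (or Jensen)
\[
\E\sup_{s\in[0,t]}\Big|\int_0^s B_r(X_r^{x,\gamma},P_r^*\gamma)\d r\Big|^p\le t^{p-1}\int_0^t \E|B_r|^p\d r\le Ct^p\big(1+|x|^p+\|\gamma\|_2^p\big),
\]
and since $t\le T$ we may bound $t^p\le T^{p/2}t^{p/2}$, yielding the desired $t^{p/2}$ rate with a constant that still depends only on $T$, $p$, and the data. Adding the two estimates gives (\ref{sos}).

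\textbf{Bounded drift case.} When $B$ is uniformly bounded the drift contribution becomes $\E\sup_{s\in[0,t]}|\int_0^s B_r\d r|^p\le \|B\|_\infty^p t^p$, with no dependence on $(x,\gamma)$; combined with the BDG bound on the stochastic integral this yields (\ref{sos01}), and then the reduction above gives (\ref{soi01}). No significant obstacle is anticipated: the argument is a clean application of BDG, the linear growth bound \eqref{BBS}, and the a priori estimates already established in \eqref{ga3} and \eqref{sus}; the only point requiring care is the use of $t\le T$ to absorb the $t^{p/2}$ gap between the drift ($t^p$) and diffusion ($t^{p/2}$) contributions, which is why the constant $c_p$ is allowed to depend on $T$.
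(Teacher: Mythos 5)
Your proposal is correct and follows essentially the same route as the paper: reduce \eqref{soi} (resp. \eqref{soi01}) to \eqref{sos} (resp. \eqref{sos01}) via the identity $(X^{x,\gamma}_t)^{(1)}-x^{(1)}-tMx^{(2)}=\int_0^t M((X^{x,\gamma}_s)^{(2)}-x^{(2)})\,\d s$, and then prove \eqref{sos} by BDG for the stochastic integral together with \eqref{BBS}, \eqref{ga3} and \eqref{sus} for the drift (the paper only sketches these steps, which you have written out in full, including the harmless absorption of $t^{p}$ into $T^{p/2}t^{p/2}$).
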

\begin{proof} 
Observe that
\begin{equation*}
\begin{cases}
(X^{x,\gamma}_t)^{(1)}= x^{(1)}+\int_0^tM(X^{x,\gamma}_s)^{(2)}\d s, \\
(X^{x,\gamma}_t)^{(2)}=x^{(2)}+\int_0^tB_s(X^{x,\gamma}_s,P_s^\ast\gamma)\d s+\int_0^t\sigma_s\d W_s.
\end{cases}
\end{equation*}
We have
$$(X^{x,\gamma}_t)^{(1)}-x^{(1)}-tM x^{(2)}=\int_0^t M((X^{x,\gamma}_s)^{(2)}-x^{(2)})\d s.$$
So, it is sufficient to prove \eqref{sos} and \eqref{sos01}. When $B$ is bounded, it is easy to get \eqref{sos01} by the  BDG inequality. Furthermore,
it is not difficult to see from the BDG inequality, \eqref{BBS}, \eqref{ga3} and \eqref{sus} that \eqref{sos} holds.
\end{proof}
\begin{lem}\label{L1}
Assume {\bf(A2)}. Then there exists a constant $c>0$ such that
\begin{align}\label{WAr}
\W_2(P_t^\ast\gamma,P_t^\ast \tt\gamma)\leq c \W_2(\gamma,\tt\gamma)+c \int_0^t \W_{\beta,\alpha}(P_s^\ast\gamma,P_s^\ast \tt\gamma) \d s,\ \ t\in [0,T], \gg,\tt\gg\in \scr P_2.
\end{align}
\end{lem}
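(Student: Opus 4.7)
The plan is to use a synchronous coupling of the decoupled SDEs and then a Gronwall-type argument directly on $\W_2$ (rather than on $\W_2^2$), so that the linear-in-$\W_{\beta,\alpha}$ form of the conclusion is preserved.

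First I would pick an optimal $\W_2$-coupling $(X_0,\tilde X_0)$ of $\gamma$ and $\tilde\gamma$, so that $\E|X_0-\tilde X_0|^2=\W_2(\gamma,\tilde\gamma)^2$, and let $(X_s)_{s\in[0,T]}$, $(\tilde X_s)_{s\in[0,T]}$ be the solutions of \eqref{E0} driven by the same Brownian motion $W$, with initial data $X_0$ and $\tilde X_0$ respectively. Then $\L_{X_s}=P_s^\ast\gamma$ and $\L_{\tilde X_s}=P_s^\ast\tilde\gamma$, so that $\W_2(P_t^\ast\gamma,P_t^\ast\tilde\gamma)^2\le \E|X_t-\tilde X_t|^2$. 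Note that \eqref{ga3} guarantees $P_s^\ast\gamma,P_s^\ast\tilde\gamma\in\scr P_2\subset\scr P_{\beta,\aa}$, so the quantity $\W_{\beta,\alpha}(P_s^\ast\gamma,P_s^\ast\tilde\gamma)$ is finite.

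Next, for the degenerate component $(X_t-\tilde X_t)^{(1)}=(X_0-\tilde X_0)^{(1)}+\int_0^t M(X_s-\tilde X_s)^{(2)}\d s$, while for the non-degenerate one the Brownian increment cancels and we have
$$(X_t-\tilde X_t)^{(2)}=(X_0-\tilde X_0)^{(2)}+\int_0^t\bigl[B_s(X_s,P_s^\ast\gamma)-B_s(\tilde X_s,P_s^\ast\tilde\gamma)\bigr]\d s.$$
Splitting the drift difference as $B_s(X_s,P_s^\ast\gamma)-B_s(\tilde X_s,P_s^\ast\gamma)+B_s(\tilde X_s,P_s^\ast\gamma)-B_s(\tilde X_s,P_s^\ast\tilde\gamma)$ and using $|\nabla B_s(\cdot,\gg)|\le K_B$ together with the measure-variable Lipschitz estimate in \textbf{(A2)}, I get
$$|X_t-\tilde X_t|\le c|X_0-\tilde X_0|+c\int_0^t|X_s-\tilde X_s|\d s+c\int_0^t\bigl[\W_2(P_s^\ast\gamma,P_s^\ast\tilde\gamma)+\W_{\beta,\alpha}(P_s^\ast\gamma,P_s^\ast\tilde\gamma)\bigr]\d s$$
pathwise, with $c$ depending only on $K_B$, $\|M\|$ and $T$.

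Now I would take the $L^2(\P)$-norm of both sides, invoke Minkowski's inequality on the time integrals, and use $\W_2(P_s^\ast\gamma,P_s^\ast\tilde\gamma)\le (\E|X_s-\tilde X_s|^2)^{1/2}$ to absorb the $\W_2$-integral on the right into the Gronwall term. Setting $g(t):=(\E|X_t-\tilde X_t|^2)^{1/2}$ and
$$h(t):=c\,\W_2(\gamma,\tilde\gamma)+c\int_0^t\W_{\beta,\alpha}(P_s^\ast\gamma,P_s^\ast\tilde\gamma)\d s,$$
one obtains $g(t)\le h(t)+c\int_0^t g(s)\d s$. Since $h$ is nondecreasing in $t$, Gronwall's lemma yields $g(t)\le \e^{cT}h(t)$ for $t\in[0,T]$, and using $\W_2(P_t^\ast\gamma,P_t^\ast\tilde\gamma)\le g(t)$ gives the claim. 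No single step is particularly subtle; the only point worth being careful about is to work directly with $\W_2$ (not $\W_2^2$), using Minkowski rather than Jensen on the drift integral, so that the $\W_{\beta,\alpha}$-term on the right remains linear as stated.
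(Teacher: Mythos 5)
Your proposal is correct and follows essentially the same route as the paper: a synchronous coupling started from an optimal $\W_2$-coupling of $(\gamma,\tilde\gamma)$, the Lipschitz bounds of \textbf{(A2)} on the drift difference, absorption of the $\W_2(P_s^\ast\gamma,P_s^\ast\tilde\gamma)$ term via $\W_2(P_s^\ast\gamma,P_s^\ast\tilde\gamma)\le(\E|X_s-\tilde X_s|^2)^{1/2}$, and Gronwall. The only (cosmetic) difference is that the paper runs the estimate on $\E\sup_{s\le t}|X_s-\tilde X_s|^2$ and extracts the linear bound by taking square roots at the end, whereas you work directly with the $L^2$-norm using Minkowski; both yield \eqref{WAr}.
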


\begin{proof} Take $\F_0$-measurable random variables $X_0^\gg,X_0^{\tt\gg}$  such that
\beq\label{FGG} \L_{X_0^\gg}=\gg,\ \ \L_{X_0^{\tt\gg}}=\tt\gg,\ \ \W_2(\gg,\tt\gg)^2=\E|X_0^\gg-X_0^{\tt\gg}|^2.\end{equation}
By {\bf(A2)}, we find a constant $c_1>1$ such that
\beg{align*}\E\Big[\sup_{s\in [0,t]}  |X^{\tt\gamma}_s-X^{\gamma}_s|^2\Big]
&\leq c_1 \E|X^{\tt\gamma}_0-X^{\gamma}_0|^2 \\
  &+  c_1  \left(\int_0^t   \big\{\W_{\beta,\alpha}(P_s^\ast\gamma, P_s^*\tt\gg)+ \W_2(P_s^*\gg, P_s^*\tt\gg)\big\}\d s\right)^2\\
  &+c_1\E\int_0^t|X^{\tt\gamma}_s-X^{\gamma}_s|^2\d s,\ \ t\in [0,T].\end{align*}
So, it follows from the inequality $\W_2(P_s^*\gg, P_s^*\tt\gg)^2\leq \E |X^{\tt\gamma}_s-X^{\gamma}_s|^2$ and Gronwall's inequality that \eqref{WAr} holds.
\end{proof}

The following H\"older inequality for concave functions comes from \cite[Lemma 2.4]{HW22a}.

\beg{lem}\label{LN} Let $\aa: [0,\infty)\to [0,\infty)$ be concave. Then for any non-negative random variables $\xi$ and $\eta$,
\begin{align}\label{Alp}\E[\aa(\xi)\eta]\le \|\eta\|_{L^p(\P)} \aa\Big(\|\xi\|_{L^{\ff{p}{p-1}} (\P)}\Big),\ \ p\ge 1.\end{align}
\end{lem}
The following lemma is crucial in the proof of the desired Harnack inequality.
\begin{lem}\label{DLP}
Assume {\bf (A1)}-{\bf(A3)}. Then there exists a constant $c>0$ such that
 \begin{equation}\begin{split}\label{bao}
  \W_{\beta,\alpha}(P_t^\ast\gg,P_t^\ast\tt\gg)&\le     c  \W_2(\tt\gg,\gg)(1 +\|\gg\|_2+\|\tt\gg\|_2)\\
  &\quad\times\bigg\{\ff{ \aa(t^{\ff 1 2}) }{\ss t}+ t^{\frac{3(\beta-1)}{2}}+  \e^{c  \big(1+\|\gg\|_2+\|\tt\gg\|_2\big)^2}\bigg\},\ \
     t\in (0,T],  \gg,\tt\gg\in \scr P_2.
     \end{split}\end{equation}
 Consequently,  there exists a constant $\tilde{c}>0$ such that for any $\gg,\tt\gg\in \scr P_2,$
\begin{equation}\begin{split}\label{TTH}\sup_{t\in [0,T]}  &\W_2(P_t^\ast\gg,P_t^\ast\tt\gg)
\le \e^{\tilde{c}  \big(1+\|\gg\|_2+\|\tt\gg\|_2\big)^2} \W_2(\gg,\tt\gg).
\end{split}\end{equation}
If $B$ is bounded, we have
\begin{align}\label{bal}\W_{\beta,\alpha}(P_t^\ast\gg,P_t^\ast\tt\gg)\leq c\W_2(\tt\gg,\gg)\Big\{\ff{ \aa(t^{\ff 1 2}) }{\ss t}+t^{\frac{3(\beta-1)}{2}}\Big\}
\end{align}
and
\begin{align}\label{wts}\sup_{t\in [0,T]}  \W_2(P_t^\ast\gg,P_t^\ast\tt\gg)\leq c\W_2(\tt\gg,\gg).\end{align}
\end{lem}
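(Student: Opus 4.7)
My plan is to start from the dual formula
$$\W_{\beta,\aa}(P_t^\ast\gg,P_t^\ast\tt\gg)=\sup_{f\in\B_b(\R^{m+d}),\,[f]_{\beta,\aa}\le 1}|P_tf(\gg)-P_tf(\tt\gg)|$$
and to perform the standard decomposition
$$P_tf(\gg)-P_tf(\tt\gg)=\int(P_t^\gg f-P_t^{\tt\gg}f)\,\d\gg+\int P_t^{\tt\gg}f\,\d(\gg-\tt\gg)=:I_1+I_2.$$
Denote $u(s):=\W_{\beta,\aa}(P_s^\ast\gg,P_s^\ast\tt\gg)$. For $I_2$, an optimal $\W_2$-coupling of $\gg,\tt\gg$ reduces the task to controlling $|\nabla P_t^{\tt\gg}f(z)|$ pointwise.

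For the non-degenerate direction I would apply the Bismut formula of Theorem \ref{T3.2}(1) with $h=(0,h^{(2)})$ and center around $c=f(x^{(1)}+tMx^{(2)},x^{(2)})$: the bound $\|N_t(h)\|_{L^2}\lesssim t^{-1/2}|h^{(2)}|$ combined with Lemma \ref{poa} and Lemma \ref{LN} (to handle the $\aa$-part of $\|f(X_t^{x,\tt\gg})-c\|_{L^2}$) gives $|\nabla^{(2)}P_t^{\tt\gg}f(x)|\lesssim(1+|x|+\|\tt\gg\|_2)[\aa(\sqrt t)/\sqrt t+t^{(3\beta-1)/2}]$. The main technical difficulty is the $x^{(1)}$-derivative, since the naive Bismut bound $|\nabla^{(1)}P_t^{\tt\gg}f|\lesssim t^{-3/2}\|f-c\|_{L^2}$ produces a spurious $\aa(\sqrt t)/t^{3/2}$ factor, much larger than the claimed $\aa(\sqrt t)/\sqrt t$. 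To fix this I would combine both assertions of Theorem \ref{T3.2} via the decomposition $f=\tilde f_z+g_z$ with $z:=x^{(1)}+tMx^{(2)}$, where $\tilde f_z(y):=f(z,y^{(2)})$ depends only on $y^{(2)}$ and $g_z(y):=f(y)-f(z,y^{(2)})$ satisfies $|g_z(y)|\le|y^{(1)}-z|^\beta$. Treating $z$ as a parameter, Theorem \ref{T3.2}(2) applied to $\tilde f_z-f(z,x^{(2)})$ yields $|\nabla^{(1)}P_t^{\tt\gg}\tilde f_z(x)|\lesssim(1+|x|+\|\tt\gg\|_2)\sqrt t\,\aa(\sqrt t)$, while Theorem \ref{T3.2}(1) applied to $g_z$, together with \eqref{soi}, gives $\|g_z(X_t^{x,\tt\gg})\|_{L^2}\lesssim(1+|x|+\|\tt\gg\|_2)^\beta t^{3\beta/2}$ and hence $|\nabla^{(1)}P_t^{\tt\gg}g_z(x)|\lesssim(1+|x|+\|\tt\gg\|_2)^\beta t^{3(\beta-1)/2}$. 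Summing yields $|\nabla P_t^{\tt\gg}f(x)|\lesssim(1+|x|+\|\tt\gg\|_2)\phi(t)$ with $\phi(t):=\aa(\sqrt t)/\sqrt t+t^{3(\beta-1)/2}$, and Cauchy--Schwarz along the coupling gives $|I_2|\lesssim(1+\|\gg\|_2+\|\tt\gg\|_2)\phi(t)\W_2(\gg,\tt\gg)$.

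For $I_1$ I would apply Girsanov's theorem with drift shift $b_s^x:=\sigma_s^{-1}[B_s(X_s^{x,\gg},P_s^\ast\tt\gg)-B_s(X_s^{x,\gg},P_s^\ast\gg)]$, so that $P_t^{\tt\gg}f(x)=\E[R_t^xf(X_t^{x,\gg})]$ for the Girsanov density $R_t^x$; then $\E(1-R_t^x)=0$ permits subtracting the constant $f(x^{(1)}+tMx^{(2)},x^{(2)})$. Assumption {\bf(A2)} gives $|b_s^x|\le K_B\|\sigma_s^{-1}\|(\W_2(P_s^\ast\gg,P_s^\ast\tt\gg)+u(s))$, and the standard estimate $\|1-R_t^x\|_{L^2}^2\le \e^{\int_0^t|b_s^x|^2\d s}-1$ together with the $L^2$-control of $f(X_t^{x,\gg})-f(x^{(1)}+tMx^{(2)},x^{(2)})$ (analogous to the $I_2$ computation, now using moment bounds along $\gg$) and integration over $\gg$ yields a Volterra-type bound on $|I_1|$ in terms of $\int_0^t[\W_2(P_s^\ast\gg,P_s^\ast\tt\gg)^2+u(s)^2]\,\d s$.

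The final step is to close the loop. The combined bound
$$u(t)\le C(1+\|\gg\|_2+\|\tt\gg\|_2)\phi(t)\W_2(\gg,\tt\gg)+C(1+\|\gg\|_2)(t^{3\beta/2}+\aa(\sqrt t))\big[\e^{C\int_0^t(\W_2(P_s^\ast\gg,P_s^\ast\tt\gg)^2+u^2)\d s}-1\big]^{1/2},$$
coupled with the Lipschitz estimate from Lemma \ref{L1}, is a nonlinear Gronwall-type inequality. The a priori bound $\sup_{t\in[0,T]}u(t)\le C(1+\|\gg\|_2+\|\tt\gg\|_2)$ (available from \eqref{ga3} and the elementary estimate $\W_{\beta,\aa}\le \W_2^\beta+\aa(\W_2)$) together with the integrability $\phi\in L^2([0,T])$---ensured by $\int_0^1\aa(r)^2/r\,\d r<\infty$ for $\aa\in\scr A$ and by $\beta>\tfrac{2}{3}$, which makes $\int_0^T t^{3(\beta-1)}\d t$ finite---permits a Gronwall iteration that produces the exponential factor $\e^{c(1+\|\gg\|_2+\|\tt\gg\|_2)^2}$, yielding \eqref{bao}. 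The consequence \eqref{TTH} follows by inserting \eqref{bao} into Lemma \ref{L1} and using $\int_0^T\phi(s)\,\d s<\infty$. For bounded $B$, the sharper moment bounds \eqref{soi01}--\eqref{sos01} eliminate the $(1+|x|+\|\gg\|_2)$-factors throughout, producing \eqref{bal} and \eqref{wts}.
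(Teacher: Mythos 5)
Your skeleton is the same as the paper's: the dual formula, the splitting into a Girsanov term (comparing $P_t^{\gg}$ and $P_t^{\tt\gg}$ under a common reference process) and a gradient term (estimating $\nabla P_t^{\cdot}f$ along a $\W_2$-coupling of the initial laws), and in particular your decomposition of $f$ at $z=x^{(1)}+tMx^{(2)}$ into a piece bounded by $|y^{(1)}-z|^\beta$ (treated by Theorem \ref{T3.2}(1)) and a piece depending only on $y^{(2)}$ (treated by Theorem \ref{T3.2}(2)) is exactly the mechanism of Lemmas \ref{I1t} and \ref{I2t}. The genuine gap is in your final Gronwall step. Writing $u(s)=\W_{\beta,\aa}(P_s^\ast\gg,P_s^\ast\tt\gg)$ and $v(s)=\W_2(P_s^\ast\gg,P_s^\ast\tt\gg)$, to keep your bound linear in $\W_2(\gg,\tt\gg)$ you must linearize $\e^{x}-1\le x\e^{x}$ in the Girsanov estimate, and then the coefficient multiplying $\int_0^t u(s)^2\,\d s$ contains the factor $\e^{C\int_0^t(u^2+v^2)\d s}$, which by your own a priori bound $u,v\le C(1+\|\gg\|_2+\|\tt\gg\|_2)$ is of size $\e^{C(1+\|\gg\|_2+\|\tt\gg\|_2)^2}$. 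Gronwall then yields the factor $\exp\{CT\,\e^{C(1+\|\gg\|_2+\|\tt\gg\|_2)^2}\}$, a \emph{double} exponential in $(1+\|\gg\|_2+\|\tt\gg\|_2)^2$, not the single exponential claimed in \eqref{bao}; similarly, for bounded $B$ the Girsanov exponent still involves $\W_2(\gg,\tt\gg)$ and $\int_0^T u^2$, so \eqref{bal} and \eqref{wts} with constants independent of $\gg,\tt\gg$ do not follow either. Within the direct two-measure comparison there is no way to keep the bound proportional to $\W_2(\gg,\tt\gg)$ while preventing the Girsanov exponential from entering the Gronwall rate.

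This is exactly what the paper's interpolation device is for, and it is the step missing from your proposal. Setting $X_0^{\gg^\vv}=X_0^\gg+\vv(X_0^{\tt\gg}-X_0^\gg)$ and comparing $P_t^\ast\gg^\vv$ with $P_t^\ast\gg^{\vv+r}$, the relevant Girsanov factor is $\psi(\vv,r)=\e^{cr^2\W_2(\gg,\tt\gg)^2+c\int_0^T\W_{\beta,\aa}(P_s^\ast\gg^\vv,P_s^\ast\gg^{\vv+r})^2\d s}$, which tends to $1$ as $r\downarrow0$; hence after Gronwall and the limit $r\downarrow0$ the rate is only $c(1+\|\gg\|_2+\|\tt\gg\|_2)^2$ (respectively a universal constant when $B$ is bounded), giving a Lipschitz bound for $\vv\mapsto\W_{\beta,\aa}(P_t^\ast\gg,P_t^\ast\gg^{\vv})$ with the single-exponential constant, and \eqref{bao} follows by integrating this derivative bound over $\vv\in[0,1]$ (using $\gg^1=\tt\gg$), after which \eqref{TTH} is obtained from Lemma \ref{L1} as you indicate. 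Your estimates for the two terms can be kept essentially verbatim with $(\gg,\tt\gg)$ replaced by $(\gg^{\vv},\gg^{\vv+r})$; it is only the closing argument that must be run at the infinitesimal level in $\vv$ rather than directly between $\gg$ and $\tt\gg$.
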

Let $X_0^\gg$ and $X_0^{\tt\gg}$ be in \eqref{FGG}.
For any $\vv\in [0,2]$, let
$$ X_0^{\gg^\vv}:=X_0^\gg+\vv(X_0^{\tt\gg}-X_0^\gg), \ \ \ \gg^\vv:=\L_{X_0^{\gg^\vv}}.$$
By the definition of $\W_{\beta,\alpha}$, for any $\vv,r\in[0,1]$ and $t\in[0,T]$, we have
\begin{equation}\begin{split}\label{I12ts}
\W_{\beta,\alpha}(P_t^\ast\gamma^{\varepsilon+r},P_t^\ast\gamma^{\varepsilon})^2&=\sup_{f\in\scr B_b(\R^{m+d}), [f]_{\beta,\alpha}\leq 1}|P_tf (\gamma^{\varepsilon+r})-P_tf(\gamma^\vv)|^2\\
&\leq 2\sup_{f\in\scr B_b(\R^{m+d}),[f]_{\beta,\alpha}\leq 1} |\gamma^{\varepsilon+r} (P_t^{\gamma^{\varepsilon+r}}f-P^{\gamma^\vv}_tf)|^2\\
 &\quad+2\sup_{f\in\scr B_b(\R^{m+d}),[f]_{\beta,\alpha}\leq 1}|\gamma^{\varepsilon+r}(P_t^{\gamma^\vv}f)-\gamma^\vv (P^{\gamma^\vv}_tf)|^2\\
&=:I_1(t)+I_2(t).
\end{split}\end{equation}
Therefore, to prove Lemma \ref{DLP}, it is sufficient to derive the estimates for $I_1(t)$ and $I_2(t)$, which will be provided in the following two lemmas.
\begin{lem}\label{I1t}Assume {\bf (A1)}-{\bf(A2)}. Then there exists a constant $c>0$ such that \beq\label{I1} \begin{split}I_1(t) &\le  c(1+ \|\gg\|_2+\|\tt\gg\|_2)^2\psi(\vv,r)\\
 &\quad\times\bigg(r^2 \W_2(\gg,\tt\gg)^2+ \int_0^t \W_{\beta,\alpha}(P_s^\ast\gamma^\vv,P_s^\ast\gamma^{\vv+r})^2\d s\bigg),\ \ t\in [0,T].\end{split} \end{equation}
When $B$ is bounded, we conclude that
\beq\label{I12} \begin{split}&I_1(t) \le  c\psi(\vv,r) \bigg(r^2 \W_2(\gg,\tt\gg)^2+ \int_0^t \W_{\beta,\alpha}(P_s^\ast\gamma^\vv,P_s^\ast\gamma^{\vv+r})^2\d s\bigg),\ \ t\in [0,T],\end{split} \end{equation}
\end{lem}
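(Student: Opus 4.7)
The plan is to couple the two decoupled SDEs with measure-curve parameters $P_\cdot^\ast\gamma^{\vv+r}$ and $P_\cdot^\ast\gamma^\vv$ synchronously, driven by the same initial value $x$ and the same Brownian motion. Setting $Z_s:=X_s^{x,\gamma^{\vv+r}}-X_s^{x,\gamma^\vv}$, the diffusion coefficients cancel and $Z_0=0$, so $Z$ solves a random ODE
\beg{align*}
\d Z_s^{(1)}&=M Z_s^{(2)}\,\d s,\\
\d Z_s^{(2)}&=\bigl[B_s(X_s^{x,\gamma^{\vv+r}},P_s^\ast\gamma^{\vv+r})-B_s(X_s^{x,\gamma^\vv},P_s^\ast\gamma^\vv)\bigr]\d s.
\end{align*}
Adding and subtracting $B_s(X_s^{x,\gamma^\vv},P_s^\ast\gamma^{\vv+r})$, the gradient bound $|\nn B_t|\le K_B$ in {\bf(A2)} controls the first difference by $K_B|Z_s|$, and the measure-Lipschitz bound in {\bf(A2)} controls the second by $K_B(\W_2(P_s^\ast\gamma^{\vv+r},P_s^\ast\gamma^\vv)+\W_{\beta,\alpha}(P_s^\ast\gamma^{\vv+r},P_s^\ast\gamma^\vv))$. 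Since $Z_s^{(1)}=\int_0^s M Z_u^{(2)}\d u$, a Gronwall iteration on $[0,t]$ then yields, for $i=1,2$,
$$\E|Z_t^{(i)}|\le C\Bigl(\W_2(\gamma^{\vv+r},\gamma^\vv)+\int_0^t\bigl[\W_2(P_s^\ast\gamma^{\vv+r},P_s^\ast\gamma^\vv)+\W_{\beta,\alpha}(P_s^\ast\gamma^{\vv+r},P_s^\ast\gamma^\vv)\bigr]\d s\Bigr),$$
where the initial gap is handled via $\W_2(\gamma^{\vv+r},\gamma^\vv)\le r\,\W_2(\gg,\tt\gg)$, which is immediate from the construction $X_0^{\gamma^\vv}=X_0^\gg+\vv(X_0^{\tt\gg}-X_0^\gg)$.

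Next, for any test function $f\in\scr B_b(\R^{m+d})$ with $[f]_{\beta,\alpha}\le 1$, the definition of $\rho_{\beta,\alpha}$ together with the dual form of $\W_{\beta,\alpha}$ gives
$$|P_t^{\gamma^{\vv+r}}f(x)-P_t^{\gamma^\vv}f(x)|\le \E|Z_t^{(1)}|^\beta+\E\,\aa(|Z_t^{(2)}|).$$
Jensen's inequality (since $\beta\in(2/3,1]\subset(0,1]$) controls the H\"older term by $(\E|Z_t^{(1)}|)^\beta$, and Lemma \ref{LN} with $p=\infty$ (concavity of $\aa$) controls the Dini term by $\aa(\E|Z_t^{(2)}|)$. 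Squaring the resulting bound, integrating against $\gamma^{\vv+r}(\d x)$, and invoking Lemma \ref{L1} to replace the internal $\W_2(P_s^\ast\gamma^{\vv+r},P_s^\ast\gamma^\vv)$ by $c\,r\W_2(\gg,\tt\gg)+c\int_0^s\W_{\beta,\alpha}\d u$ (which a second Gronwall step then absorbs into the $\int_0^t\W_{\beta,\alpha}^2\d s$ term on the right-hand side) yields \eqref{I1}. The factor $(1+\|\gg\|_2+\|\tt\gg\|_2)^2$ is inherited from the moment estimates \eqref{sus} and Lemma \ref{poa}, which govern the Lipschitz constants involved. The function $\psi(\vv,r)$ packages the nonlinear losses from the $x\mapsto x^{2\beta}$ and $x\mapsto\aa(x)^2$ reductions, neither of which is directly compatible with the linear structure of a squared Wasserstein distance. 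In the bounded drift case, one substitutes \eqref{soi01}--\eqref{sos01} for \eqref{soi}--\eqref{sos}, removing the moment prefactor and giving \eqref{I12}.

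The main obstacle is precisely this nonlinear passage: squaring $\rho_{\beta,\alpha}$ produces $|Z_t^{(1)}|^{2\beta}+\aa(|Z_t^{(2)}|)^2$, which is neither linear in $|Z_t|^2$ nor compatible with a straightforward Gronwall closure in $\W_{\beta,\alpha}^2$. Closing the loop requires carefully distributing a bounded moment factor (arising from \eqref{sus}) against the missing powers $2\beta-2$ and the sub-linear deficit in $\aa$, and it is in this exchange that the combined hypothesis $\beta>2/3$ and $\int_0^1\aa(r)^2/r\,\d r<\infty$ enters decisively — ensuring that the $r$-integrations carried out later in the proof of Lemma \ref{DLP} remain finite. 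Everything else is a careful bookkeeping of Gronwall iterates combined with the synchronous coupling.
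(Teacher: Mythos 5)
Your synchronous-coupling route has a genuine gap and cannot deliver the bound \eqref{I1}. In $I_1(t)$ the two semigroups are applied at the \emph{same} starting point $x$ and differ only through the measure-valued parameter; your coupling therefore forces you to compare the test function at two distinct random points and to pay its H\"older--Dini modulus, $|P_t^{\gamma^{\vv+r}}f(x)-P_t^{\gamma^{\vv}}f(x)|\le \E|Z_t^{(1)}|^{\beta}+\E\,\aa(|Z_t^{(2)}|)$. Since $Z_0=0$ (the initial-gap term you insert does not belong here; the initial-law difference is the business of $I_2$), your Gronwall step gives $|Z_t|\le C A_t$ with $A_t:=\int_0^t[\W_2(P_s^\ast\gamma^\vv,P_s^\ast\gamma^{\vv+r})+\W_{\beta,\alpha}(P_s^\ast\gamma^\vv,P_s^\ast\gamma^{\vv+r})]\,\d s$, hence at best $I_1(t)\le c\,(A_t^{2\beta}+\aa(A_t)^2)$. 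This is strictly weaker than \eqref{I1} in the relevant regime: for $\beta<1$ one has $A^{2\beta}=A^{2}A^{2(\beta-1)}$ with $A^{2(\beta-1)}\to\infty$ as $A\downarrow 0$, and concavity with $\aa(0)=0$ gives $\aa(A)\ge\aa(1)A$ for $A\le1$, so no bounded prefactor can convert $A_t^{2\beta}+\aa(A_t)^2$ into $c\,(r^2\W_2(\gg,\tt\gg)^2+\int_0^t\W_{\beta,\alpha}(P_s^\ast\gamma^\vv,P_s^\ast\gamma^{\vv+r})^2\d s)$. Your suggestion that $\psi(\vv,r)$ ``packages the nonlinear losses'' misreads \eqref{bar0}: $\psi$ is the explicit exponential produced by a Girsanov density estimate, bounded uniformly by \eqref{bar}, not a cure for sublinearity. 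The defect matters downstream: the linearity of \eqref{I1} in $r^2\W_2^2+\int\W_{\beta,\alpha}^2$ is exactly what makes the Gronwall closure \eqref{LI3}--\eqref{bes} work and lets one divide by $r$ and send $r\downarrow0$ in \eqref{AB}; with your bound the term $(r\W_2(\gg,\tt\gg))^{2\beta}/r^2$ blows up as $r\downarrow0$, so the Lipschitz-in-$\vv$ argument of Lemma \ref{DLP}, and with it \eqref{bao}, would not follow.

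The paper's proof avoids the modulus altogether by a change of measure rather than a pathwise coupling: keep the single process $X^{x,\gamma^\vv}$ of \eqref{Evv}, set $\eta_t^{\vv,r}=\sigma_t^{-1}[B_t(X_t^{x,\gamma^\vv},P_t^\ast\gg^{\vv+r})-B_t(X_t^{x,\gamma^\vv},P_t^\ast\gg^{\vv})]$, so that {\bf(A2)} gives the pointwise bound $|\eta_t^{\vv,r}|\le c\{\W_2(P_t^\ast\gamma^\vv,P_t^\ast\gamma^{\vv+r})+\W_{\beta,\alpha}(P_t^\ast\gamma^\vv,P_t^\ast\gamma^{\vv+r})\}$, linear in the distances. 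Girsanov's theorem then yields $P_t^{\gamma^{\vv+r}}f(x)-P_t^{\gamma^\vv}f(x)=\E[(f(X_t^{x,\gamma^\vv})-f(x^{(1)}+tMx^{(2)},x^{(2)}))(R_t^{\vv,r}-1)]$, where the seminorm $[f]_{\beta,\alpha}\le1$ enters only through the crude linear-growth bound \eqref{kvs}; Cauchy--Schwarz, the moment estimates of Lemma \ref{poa} together with \eqref{GG2} (the source of the factor $(1+\|\gg\|_2+\|\tt\gg\|_2)^2$), the bound $\E|R_t^{\vv,r}-1|^2\le\psi(\vv,r)\int_0^t\{\W_{\beta,\alpha}^2+\W_2^2\}(P_s^\ast\gamma^\vv,P_s^\ast\gamma^{\vv+r})\,\d s$, and Lemma \ref{L1} with \eqref{TT} to absorb the $\W_2$ term, then give \eqref{I1}; the bounded-$B$ case uses \eqref{soi01}--\eqref{sos01} in place of \eqref{soi}--\eqref{sos}. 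If you wish to keep a coupling picture, it must be a coupling by change of measure, which shifts the law but never evaluates $f$ at two different points.
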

where
\beq\label{bar0} \psi(\vv,r):=  \e^{cr^2\W_2(\gamma,\tilde{\gamma})^2+c\int_0^T\W_{\beta,\alpha}(P_s^\ast\gamma^\vv,P_s^\ast\gamma^{\vv+r})^2\d s}.\end{equation}
\begin{proof}
Firstly, by the definition of $\gamma^\vv$, we have
\beq\label{GG2}  \|\gamma^\vv\|_2^2\le 8\|\gg\|_2^2+8\|\tt\gg\|_2^2,\ \ \vv\in [0,2],\end{equation}
and
\beq\label{TT}  \W_2(\gg^{\vv},\gg^{\vv+r})^2\leq  r^2 \W_2(\gg,\tt\gg)^2,\ \ \vv,r\in [0,1].\end{equation}
For any $\vv\in[0,2]$,
consider the SDE
\begin{align}\label{Evv}\begin{cases}
\d (X^{x,\gg^\vv}_t)^{(1)}=M(X^{x,\gg^\vv}_t)^{(2)}\d t, \\
\d (X^{x,\gg^\vv}_t)^{(2)}=B_t(X^{x,\gg^\vv}_t,P_t^\ast\gg^\vv)\d t+\sigma_t\d W_t,\ \ X^{x,\gg^\vv}_0=x\in\R^{m+d},t\in[0,T].
\end{cases}
\end{align}
For any $r,\vv\in [0,1]$, define
\beg{align*}  \eta_t^{\vv,r}=\sigma_t^{-1} [B_t(X^{x,\gamma^\vv}_t, P_t^*\gg^{\vv+r})-B_t(X^{x,\gamma^\vv}_t, P_t^*\gg^\vv)],\ \ t\in [0,T].
\end{align*}
By {\bf (A1)}-{\bf(A2)}, \eqref{alw} for $k=1$, \eqref{ga3} and \eqref{GG2}, there exist constants $c_1, c_2>0$ such that
\begin{equation}\begin{split}\label{etk}\sup_{t\in[0,T] }|\eta^{\varepsilon,r}_t|&\leq c_1\sup_{t\in[0,T]}\big\{\W_{\beta,\alpha}(P_t^\ast\gamma^\varepsilon, P_t^\ast\gamma^{\vv+r})+\W_2(P_t^\ast\gamma^\varepsilon, P_t^\ast\gamma^{\vv+r})\big\}\\
&\leq c_2(1+\|\gamma\|_2+\|\tilde{\gamma}\|_2),\ \ r,\vv\in [0,1].
\end{split}\end{equation}
It follows from Girsanov's theorem that
   $$W_t^{\vv,r}=  W_t- \int_0^t \eta_s^{\vv,r}\d s,\ \ \ t\in [0,T]$$
 is a $d$-dimensional Brownian motion under  the probability  $\Q^{\vv,r}:=R^{\varepsilon,r}_T \P$ with
$$R^{\varepsilon,r}_t:=\exp\left\{\int_0^t\<\eta^{\varepsilon,r}_s,\d W_s\>-\frac{1}{2}\int_0^t|\eta^{\varepsilon,r}_s|^2\d s\right\}, \ \ t\in[0,T].$$
Therefore, \eqref{Evv} can be reformulated as
\begin{align*}\begin{cases}
\d (X^{x,\gg^\vv}_t)^{(1)}=M(X^{x,\gg^\vv}_t)^{(2)}\d t, \\
\d (X^{x,\gg^\vv}_t)^{(2)}=B_t(X^{x,\gg^{\vv}}_t,P_t^\ast\gg^{\vv+r})\d t+\sigma_t\d W_t^{\vv,r},\ \ X^{x,\gg^\vv}_0=x\in\R^{m+d},t\in[0,T].
\end{cases}
\end{align*}
So, for any $f\in \scr B_b(\R^{m+d})$, it holds
\begin{align*} &P_t^{\gamma^{\varepsilon+r}}f(x)-P^{\gamma^\vv}_tf(x)\\
&=\mathbb{E} \left[f(X^{x,\gamma^\vv}_t)(R_t^{\varepsilon,r}-1)\right]\\
&=\E \left[[f(X^{x,\gamma^\vv}_t)-f(x^{(1)}+tMx^{(2)}, x^{(2)})](R_t^{\varepsilon,r}-1)\right],\ \  \varepsilon,r\in(0,1], t\in[0,T], x\in\R^{m+d}.
\end{align*}
Moreover, by \eqref{etk}, \eqref{WAr} and \eqref{TT}, we obtain
\beq\label{RTo}\beg{split}
 &\E|R_t^{\varepsilon,r}-1|^2= \E\big[(R_t^{\varepsilon,r})^2-1\big]
 \leq \mathrm{esssup}_{\Omega}(\e^{\int_0^t|\eta_s^{\varepsilon,r}|^2\d s}-1)\\
 &\leq \mathrm{esssup}_{\Omega}\left(\e^{\int_0^t|\eta_s^{\varepsilon,r}|^2\d s}\int_0^t|\eta_s^{\varepsilon,r}|^2\d s\right)\\
 &\leq \psi(\vv,r)\int_0^t\big\{\W_{\beta,\alpha}(P_s^\ast\gamma^\vv,P_s^\ast\gamma^{\vv+r})^2+\W_2(P_s^\ast\gamma^\vv,P_s^\ast\gamma^{\vv+r})^2\big\}\d s\\
 &\leq c_3 \psi(\vv,r)\left(r^2\W_2(\gamma,\tilde{\gamma})^2+\int_0^t\W_{\beta,\alpha}(P_s^\ast\gamma^\vv,P_s^\ast\gamma^{\vv+r})^2\d s\right),\ \ t\in[0,T]
\end{split}\end{equation}
for some constant $c_3>0$.
By    \eqref{etk},  we have
\beq\label{bar} \bar\psi :=\sup_{\vv,r\in [0,1]}\psi(\vv,\gg)<\infty. \end{equation}
Combining \eqref{kvs} for $k=1$, the Cauchy-Schwarz inequality and \eqref{RTo},  we arrive at
\beg{align*}
&\sup_{f\in \scr B_b(\R^{m+d}),[f]_{\beta,\alpha}\leq 1} |\gamma^{\varepsilon+r} (P_t^{\gamma^{\varepsilon+r}}f-P^{\gamma^\vv}_tf)|^2\\
&\leq\bigg(\int_{\R^{m+d}} \sup_{f\in \scr B_b(\R^{m+d}),[f]_{\beta,\alpha}\le 1}  \Big|\E \left[(f(X^{x,\gamma^\vv}_t)-f(x^{(1)}+tMx^{(2)}, x^{(2)}))(R_t^{\varepsilon,r}-1)\right]\Big| \gg^{\vv+r}(\d x)\bigg)^2 \\
&\le   \bigg(\int_{\R^{m+d}}\{2(2(\alpha(1)+1))^2\E(1+|X^{x,\gamma^\vv}_t-(x^{(1)}+tMx^{(2)}, x^{(2)})|^2)\}^{\frac{1}{2}}  \gg^{\vv+r}(\d x)\bigg)^2\\
&\qquad\quad\times \sup_x\E[|R_t^{\varepsilon,r}-1|^2] \\
&\le   \bigg(\int_{\R^{m+d}}\{2(2(\alpha(1)+1))^2\E(1+|X^{x,\gamma^\vv}_t-(x^{(1)}+tMx^{(2)}, x^{(2)})|^2)\}^{\frac{1}{2}}  \gg^{\vv+r}(\d x)\bigg)^2\\
&\qquad\quad\times   c_3\psi(\vv,r)\left(r^2\W_2(\gamma,\tilde{\gamma})^2+\int_0^t\W_{\beta,\alpha}(P_s^\ast\gamma^\vv,P_s^\ast\gamma^{\vv+r})^2\d s\right),\ \ t\in [0,T].
\end{align*}
This together with  \eqref{soi}, \eqref{sos} and \eqref{GG2}, we derive from Jensen's inequality that
\eqref{I1} holds for some constant $c>0$.
When $B$ is bounded, applying \eqref{soi01} and \eqref{sos01} replacing \eqref{soi} and \eqref{sos} respectively, we derive \eqref{I12}.
\end{proof}

\begin{lem}\label{I2t} Assume {\bf (A1)}-{\bf(A3)}. Then there exists a constant $c>0$ such that \beg{equation} \label{I23}\begin{split} I_2(t)
&\le cr^2  \W_2(\gamma,\tilde{\gamma})^2 (1+ \|\gg\|_2+ \|\tilde{\gg}\|_2)^{2}\bigg(t^{3(\beta-1)}+\frac{\aa\big(t^{\ff 1 2}\big)^2}{t}\bigg),\ \ t\in(0,T].\end{split}\end{equation}
When $B$ is bounded, it holds
\beg{equation} \label{I2s}\begin{split} I_2(t)
&\le cr^2  \W_2(\gamma,\tilde{\gamma})^2 \bigg(t^{3(\beta-1)}+\frac{\aa\big(t^{\ff 1 2}\big)^2}{t}\bigg)\ \ t\in(0,T].\end{split}\end{equation}
\end{lem}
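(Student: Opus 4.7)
The strategy is to reduce $I_2(t)$ to a pointwise gradient bound for $g:=P_t^{\gg^\vv}f$, where $f\in\scr B_b(\R^{m+d})$ with $[f]_{\bb,\aa}\le 1$, and then obtain that bound from the two Bismut-type formulas of Theorem \ref{T3.2}. Realising $X_0^\gg,X_0^{\tt\gg}$ optimally so that $\W_2(\gg,\tt\gg)^2=\E|X_0^\gg-X_0^{\tt\gg}|^2$ and setting $Y:=X_0^{\gg^\vv}$, $\Delta:=r(X_0^{\tt\gg}-X_0^\gg)$, the difference $\gg^{\vv+r}(g)-\gg^\vv(g)=\E[g(Y+\Delta)-g(Y)]$ expands via the fundamental theorem of calculus and Cauchy--Schwarz to
\begin{equation*}
|\gg^{\vv+r}(g)-\gg^\vv(g)|\le r\W_2(\gg,\tt\gg)\int_0^1\sqrt{\E|\nn g(Y+\ll\Delta)|^2}\,\d\ll.
\end{equation*}
Since $\|\gg^\vv\|_2\le 2(\|\gg\|_2+\|\tt\gg\|_2)$, the task reduces to proving the pointwise bound
\begin{equation*}
|\nn g(x)|\le C(1+|x|+\|\gg^\vv\|_2)\bigl(t^{3(\bb-1)/2}+\aa(\ss t)/\ss t\bigr),\quad x\in\R^{m+d}.
\end{equation*}

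For $\nn^{(2)}g$, the general Bismut formula \eqref{GRm} with $h=(0,v)$ gives $\nn^{(2)}_v g(x)=\E[(f(X_t^{x,\gg^\vv})-f(\xi_0))N_t(0,v)]$, where $\xi_0:=(x^{(1)}+tMx^{(2)},x^{(2)})$ is deterministic and $\E N_t(0,v)=0$. The $[f]_{\bb,\aa}\le 1$ bound splits the increment into a $\bb$-H\"older term and an $\aa$-term; Cauchy--Schwarz with \eqref{soi} and $\|N_t(0,v)\|_{L^2}\le C|v|/\ss t$ handles the first, and Lemma \ref{LN} at $p=2$ combined with \eqref{sos} and \eqref{AA} handles the second. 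The delicate direction is $\nn^{(1)}g$, because a direct application of \eqref{GRm} with $h=(v,0)$ would use $\|N_t(v,0)\|_{L^2}\le C|v|/t^{3/2}$ and produce the untenable singularity $\aa(\ss t)/t^{3/2}$ on the Dini piece. The remedy is the decomposition
\begin{equation*}
f(X_t^{x,\gg^\vv})=\bigl[f(X_t^{x,\gg^\vv})-\tt f(X_t^{x,\gg^\vv})\bigr]+\tt f(X_t^{x,\gg^\vv}),\quad \tt f(z):=f(x^{(1)}+tMx^{(2)},z^{(2)}).
\end{equation*}
The remainder is dominated by $|X_t^{(1)}-x^{(1)}-tMx^{(2)}|^\bb$, whose pairing with $N_t(v,0)$ yields via Cauchy--Schwarz and \eqref{soi} the term $C(1+|x|+\|\gg^\vv\|_2)^\bb t^{3(\bb-1)/2}|v|$. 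Because $\tt f$ depends only on the second component, Theorem \ref{T3.2}(2) (formula \eqref{NA1}) identifies $\E[\tt f(X_t^{x,\gg^\vv})N_t(v,0)]$ with the expectation against the much milder martingale $\int_0^t\<\si_s^{-1}\nn_v^{(1)}B_s,\d W_s\>$, whose $L^2$-norm is $\le C|v|\ss t$; subtracting the constant $\tt f(\xi_0)$ inside and invoking Lemma \ref{LN} then bounds this piece by $C(1+|x|+\|\gg^\vv\|_2)\aa(\ss t)\ss t\,|v|$, which is negligible compared with $t^{3(\bb-1)/2}$.

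Adding the two directional estimates and retaining the dominant terms yields the required bound on $|\nn g|$. Plugging into the reduction of the first paragraph and using $\E(1+|Y+\ll\Delta|+\|\gg^\vv\|_2)^2\le C(1+\|\gg\|_2+\|\tt\gg\|_2)^2$ (from $\|Y\|_{L^2}=\|\gg^\vv\|_2$ and $\|\Delta\|_{L^2}=r\W_2(\gg,\tt\gg)\le\|\gg\|_2+\|\tt\gg\|_2$), then squaring and taking the supremum over admissible $f$ proves \eqref{I23}. When $B$ is bounded, substituting \eqref{soi01}, \eqref{sos01} for \eqref{soi}, \eqref{sos} throughout removes the $(1+|x|+\|\gg\|_2)$-weight at every step and yields \eqref{I2s}. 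The main obstacle is the $\nn^{(1)}$-direction: without the specialised Bismut identity of Theorem \ref{T3.2}(2), the Dini contribution would carry the uncontrollable factor $1/t^{3/2}$, and the whole estimate would collapse.
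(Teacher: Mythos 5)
Your proposal is correct and follows essentially the same route as the paper: reduce $I_2(t)$ to a pointwise gradient bound for $P_t^{\gamma^\vv}f$ via the interpolation $\gg^{\vv+\theta}$, treat $\nabla^{(2)}$ with the Bismut formula \eqref{GRm}, Cauchy--Schwarz, \eqref{soi}--\eqref{sos} and Lemma \ref{LN}, and handle the delicate $\nabla^{(1)}$ direction by splitting off the part of $f$ depending only on $x^{(2)}$ and invoking Theorem \ref{T3.2}(2), exactly the decomposition $f^{(1)}_{x_0},\tilde f^{(1)}_{x_0},f^{(2)}_{x_0}$ used in the paper. The only cosmetic difference is that for the $\tilde f^{(1)}_{x_0}$ piece you apply \eqref{NA1} with Lemma \ref{LN} to get the finer bound of order $\ss t\,\aa(\ss t)$, while the paper uses \eqref{N12} with the linear growth \eqref{DDY} to get an $O(1)$ bound, both being absorbed by $t^{3(\beta-1)/2}$.
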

\begin{proof}
 Note that by Theorem \ref{T3.2}, for any $p>1$,  there exists some constant $c_1>0$ such that for $h\in\R^{m+d}$,
\begin{align}\label{12h}\left(\E(N_t(h^{(1)},0)^p)\right)^{\frac{1}{p}}\leq c_1|h^{(1)}|t^{-\frac{3}{2}},\ \ \left(\E(N_t(0,h^{(2)})^p)\right)^{\frac{1}{p}}\leq c_1|h^{(2)}|t^{-\frac{1}{2}}, \ \ t\in(0,T].
\end{align}
For any $x_0\in\R^{m+d}$ and any $f\in \scr B_b(\R^{m+d})$, let
\begin{align*}&f^{(1)}_{x_0}(x)=f(x)-f(x_0^{(1)}+Mtx_0^{(2)}, x^{(2)}),\\
&\tilde{f}^{(1)}_{x_0}(x)=f(x_0^{(1)}+Mtx_0^{(2)}, x^{(2)})-f(x_0^{(1)}+Mtx_0^{(2)}, x_0^{(2)}),\\
 &f^{(2)}_{x_0}(x)=f(x)-f(x_0^{(1)}+Mtx_0^{(2)}, x_0^{(2)}),\ \  x\in\R^{m+d}, t\in[0,T].
\end{align*}
By Theorem \ref{T3.2}(1), \eqref{soi}, the first inequality in \eqref{12h},  \eqref{GG2} and H\"{o}lder's inequality, we have
\begin{align}\label{TTL}&\sup_{f\in \scr B_b(\R^{m+d}),[f]_{\beta,\aa}\le 1}|\nabla^{(1)}P_t^{\gamma^\vv}f^{(1)}_{x_0}|(x_0)\leq c_2(1+ |x_0|+ \|\gg\|_2+ \|\tilde{\gg}\|_2)^\beta t^{\frac{3(\beta-1)}{2}},\ \ t\in(0,T]
\end{align}
for some constant $c_2>0$.
By \eqref{N12}, \eqref{sos}, \eqref{GG2} and \eqref{DDY}, we can find constants $c_3, c_4>0$ such that
\begin{equation}\begin{split}\label{TTY}\sup_{f\in \scr B_b(\R^{m+d}),[f]_{\beta,\aa}\le 1}|\nabla^{(1)}P_t^{\gamma^\vv}\tilde{f}^{(1)}_{x_0}|(x_0)&\leq c_3\{1+\E|(X_t^{x_0,\gamma^\vv})^{(2)}-x_0^{(2)}|^2\}^{\frac{1}{2}}t^{\frac{1}{2}}\\
&\leq c_4(1+ |x_0|+ \|\gg\|_2+ \|\tilde{\gg}\|_2),\ \ t\in[0,T].
\end{split}\end{equation}
Moreover, by Theorem \ref{T3.2}(1), the second inequality in \eqref{12h}, \eqref{soi}, \eqref{sos}, \eqref{GG2}, \eqref{Alp} and \eqref{DDY}, we conclude
\begin{align}\begin{split}\label{TTW}&\sup_{f\in \scr B_b(\R^{m+d}),[f]_{\beta,\aa}
\le 1}|\nabla^{(2)}P_t^{\gamma^\vv}f_{x_0}^{(2)}|(x_0)\\
&\qquad\quad\leq c_5(1+ |x_0|+ \|\gg\|_2+ \|\tilde{\gg}\|_2)\left( t^{\frac{3\beta-1}{2}}+\frac{\alpha(t^{\frac{1}{2}})}{t^{\frac{1}{2}}}\right),\ \ t\in(0,T]
\end{split}\end{align}
for some constant $c_5>0$.
Since $$\nabla^{(1)}P^{\gamma^\vv}_t f=\nabla^{(1)}P^{\gamma^\vv}_t f^{(1)}_{x_0}+\nabla^{(1)}P^{\gamma^\vv}_t \tilde{f}^{(1)}_{x_0},\ \ \nabla^{(2)}P^{\gamma^\vv}_t f=\nabla^{(2)}P^{\gamma^\vv}_t f^{(2)}_{x_0},\ \ f\in \scr B_b(\R^{m+d}),$$
we derive from \eqref{TTL} and \eqref{TTY} that
\begin{equation}\begin{split}\label{TTU}&\sup_{f\in \scr B_b(\R^{m+d}),[f]_{\beta,\aa}\le 1}|\nabla^{(1)}P^{\gamma^\vv}_tf|(x_0)\leq c_6(1+ |x_0|+ \|\gg\|_2+ \|\tilde{\gg}\|_2) t^{\frac{3(\beta-1)}{2}},\ \ t\in(0,T]
\end{split}\end{equation}
and from \eqref{TTW} that
\begin{equation}\begin{split}\label{TTN}\sup_{f\in \scr B_b(\R^{m+d}),[f]_{\beta,\aa}\le 1}|\nabla^{(2)}P^{\gamma^\vv}_tf|(x_0)\leq c_5(1+ |x_0|+ \|\gg\|_2+ \|\tilde{\gg}\|_2)\left( t^{\frac{3\beta-1}{2}}+\frac{\alpha(t^{\frac{1}{2}})}{t^{\frac{1}{2}}}\right)
\end{split}\end{equation}
for some constant $c_6>0$. Observe that
\begin{equation*}\beg{split}
&I_{2}(t)= 2\sup_{f\in \scr B_b(\R^{m+d}),[f]_{\beta,\aa}\le 1}\bigg|\E\int_0^r\ff{\d}{\d \theta} P_t^{\gg^\vv} f(X_0^{\gamma^{\vv+\theta}})\d \theta\bigg|^2\\
&\quad=2 \sup_{f\in \scr B_b(\R^{m+d}),[f]_{\beta,\aa}\le 1} \bigg|\E\int_0^r \Big\{\nn_{X_0^{\tt\gg}-X_0^\gg} P_t^{\gg^\vv} f(X_0^{\gg^{\vv+\theta}}) \Big\}\d \theta\bigg|^2.\end{split}
\end{equation*}
Combining this with \eqref{TTU}, \eqref{TTN}, \eqref{GG2} and \eqref{FGG},
 we find constants $c_7,c_8>0$ such that
\beg{equation*} \begin{split} &I_2(t) \le 2 \sup_{f\in \scr B_b(\R^{m+d}),[f]_{\beta,\aa}\le 1}\bigg(\E\bigg[|X_0^\gg-X_0^{\tt\gg}|\int_0^r |\nn P_t^{\gg^\vv} f(X_0^{\gg^{\vv+\theta}})|\d \theta \bigg]\bigg)^2\\
&\le  c_7 \bigg\{\int_0^ r \bigg[\|X_0^\gg-X_0^{\tt\gg}\|_{L^2(\P)}(1+\|X_0^{\gg^{\vv+\theta}}\|_{L^2(\P)}+ \|\gg\|_2+\|\tilde{\gg}\|_2)\bigg(t^{\frac{3\beta-1}{2}}+\frac{\alpha(t^{\frac{1}{2}})}{t^{\frac{1}{2}}}\bigg)\bigg] \d \theta\bigg\}^2\\
&\le c_8r^2  \W_2(\gamma,\tilde{\gamma})^2(1+ \|\gg\|_2+ \|\tilde{\gg}\|_2)^{2}\bigg(t^{3(\beta-1)}+\frac{\alpha(t^{\frac{1}{2}})^2}{t}\bigg).\end{split}\end{equation*}
When $B$ is bounded, repeating the above procedure and using \eqref{soi01} and \eqref{sos01} replacing \eqref{soi} and \eqref{sos} respectively, we can obtain \eqref{I2s}.
\end{proof}
Now, we are in the position to prove Lemma \ref{DLP}.
\begin{proof} [Proof of Lemma \ref{DLP}]
Firstly, by $\alpha\in\scr A$ and  $\beta>\frac{2}{3}$, we conclude that
\beq\label{ASS} \beg{split} & \int_0^T \ff{  \aa( t^{\ff 1 2})^2}t \d t =   2   \int_0^{ T^{\ff 1 2} } \ff{\aa(s)^2}s\d s<\infty,\ \ \int_0^Tt^{3(\beta-1)}\d t<\infty.\end{split}\end{equation}
\eqref{I23} together with \eqref{I1} and \eqref{I12ts} yields
\begin{equation}\begin{split}\label{NNL}
&\W_{\beta,\alpha}(P_t^\ast\gamma^\vv,P_t^\ast\gamma^{\vv+r})^2\\
&\leq c(1+ \|\gg\|_2+\|\tt\gg\|_2)^2\psi(\vv,r) \int_0^t \W_{\beta,\alpha}(P_s^\ast\gamma^\vv,P_s^\ast\gamma^{\vv+r})^2\d s\\
&+cr^2  \W_2(\gamma,\tilde{\gamma})^2 (1+ \|\gg\|_2+ \|\tilde{\gg}\|_2)^{2}\bigg(\psi(\vv,r)+t^{3(\beta-1)}+\frac{\aa\big(t^{\ff 1 2}\big)^2}{t}\bigg),\ \ t\in(0,T].
\end{split}\end{equation}
Let
$$\GG_t(\vv,r):=\int_0^t\W_{\beta,\alpha}(P_s^\ast\gamma^\vv,P_s^\ast\gamma^{\vv+r})^2\d s.$$
So, it follows from \eqref{NNL} that
 \beq\label{LI3} \beg{split} &\GG_t(\vv,r)\le c  r^2\W_2(\gg,\tt\gg)^2H(\vv,r)
+  cF(\vv,r) \int_0^t \GG_s(\vv,r)\d s,\ \ t\in [0,T],\\
&H(\vv,r):=\big[1+\|\gg\|_2+\|\tt\gg\|_2\big]^2\int_0^T\left[\psi(\vv,r)+t^{3(\beta-1)}+\frac{\aa\big(t^{\ff 1 2}\big)^2}{t}\right]\d t,\\
 &F(\vv,r):=\big[1+\|\gg\|_2+\|\tt\gg\|_2\big]^2\psi(\vv,r),\ \ \   \vv,r\in [0,1].\end{split}\end{equation}
By Gronwall's inequality and \eqref{LI3},  for any $   \vv,r\in [0,1]$ we have
\beg{equation}\begin{split} \label{bes}&\GG_t(\vv,r) \le  c  r^2\W_2(\gg,\tt\gg)^2  \e^{cF(\vv,r)t}H(\vv,r),\ \ t\in [0,T].
 \end{split} \end{equation}
Substituting this into \eqref{NNL}, we get
 \begin{equation}\begin{split}\label{FFU}
\W_{\beta,\alpha}(P_t^\ast\gamma^\vv,P_t^\ast\gamma^{\vv+r})^2&\leq c(1+ \|\gg\|_2+\|\tt\gg\|_2)^2r^2  \W_2(\gamma,\tilde{\gamma})^2\\
&\times\bigg [c\psi(\vv,r)  \e^{cF(\vv,r)t}H(\vv,r)+\psi(\vv,r)+t^{3(\beta-1)}+\frac{\aa\big(t^{\ff 1 2}\big)^2}{t}\bigg].
\end{split}\end{equation}
Note that \eqref{bar}, \eqref{bar0}, \eqref{LI3} and \eqref{bes} imply that $\psi(\vv,r)$ is bounded in $(\vv,r)\in [0,1]^2$ with $\psi(\vv,r)\to 1$ as $r\to 0$, so that by \eqref{FFU} and the dominated convergence theorem
 we find a constant $C >1$ such that
\beq\label{AB} \beg{split}& \limsup_{r\downarrow 0}\ff{ \W_{\beta,\alpha}(P_t^*\gg^\vv, P_t^*\gg^{\vv+r})}r\\
 &  \le  C \W_2(\tt\gg,\gg)(1 +\|\gg\|_2+\|\tt\gg\|_2)\bigg\{\ff{ \aa(t^{\ff 1 2}) }{\ss t}+ t^{\frac{3(\beta-1)}{2}}+  \e^{C  \big(1+\|\gg\|_2+\|\tt\gg\|_2\big)^2}\bigg\}.
\end{split} \end{equation}
where we have used the fact that for some constant $C>1$,
\beg{align*}
&(1+\|\gg\|_2+\|\tt\gg\|_2)^2\e^{cT  (1+\|\gg\|_2+\|\tt\gg\|_2)^2}\le  \e^{C  (1+\|\gg\|_2+\|\tt\gg\|_2)^2}.
\end{align*}
 By the triangle inequality,
$$\big| \W_{\beta,\alpha}(P_t^*\gg, P_t^*\gg^\vv)- \W_{\beta,\alpha}(P_t^*\gg, P_t^*\gg^{\vv+r})\big|\le \W_{\beta,\alpha}(P_t^*\gg^\vv, P_t^*\gg^{\vv+r}),  \ \    \vv,r\in [0,1],$$
so that \eqref{AB}   implies that
  $ \W_{\beta,\alpha}(P_t^*\gg, P_t^*\gg^\vv)$ is Lipschitz continuous (hence a.e. differentiable)  in $\vv\in [0,1]$ for any $t\in (0,T]$,  and
 \beg{align*} & \Big|\ff{\d}{\d \vv} \W_{\beta,\alpha}(P_t^*\gg, P_t^*\gg^{\vv})\Big|\le   \limsup_{r\downarrow 0}\ff{ \W_{\beta,\alpha}(P_t^*\gg^\vv, P_t^*\gg^{\vv+r}) }r\\
 &\le  C \W_2(\tt\gg,\gg)(1 +\|\gg\|_2+\|\tt\gg\|_2)\bigg\{\ff{ \aa(t^{\ff 1 2}) }{\ss t}+ t^{\frac{3(\beta-1)}{2}}+  \e^{C  \big(1+\|\gg\|_2+\|\tt\gg\|_2\big)^2}\bigg\},\ \ \vv\in [0,1].\end{align*}
Noting that $\gg^1=\tt\gg$, this implies the desired estimate \eqref{bao}, which combined with \eqref{WAr} yields \eqref{TTH}.

 When $B$ is bounded, we derive from \eqref{I12}, \eqref{I2s} and \eqref{I23} that
\begin{equation}\begin{split}\label{NNW}
&\W_{\beta,\alpha}(P_t^\ast\gamma^\vv,P_t^\ast\gamma^{\vv+r})^2\\
&\leq c\psi(\vv,r) \int_0^t \W_{\beta,\alpha}(P_s^\ast\gamma^\vv,P_s^\ast\gamma^{\vv+r})^2\d s\\
&+cr^2  \W_2(\gamma,\tilde{\gamma})^2 \bigg(\psi(\vv,r)+t^{3(\beta-1)}+\frac{\aa\big(t^{\ff 1 2}\big)^2}{t}\bigg),\ \ t\in(0,T].
\end{split}\end{equation}
 Then by \eqref{NNW}, we have
 \beq\label{LIs} \beg{split}
 &\GG_t(\vv,r)\le c  r^2\W_2(\gg,\tt\gg)^2H(\vv,r)
+  cF(\vv,r) \int_0^t \GG_s(\vv,r)\d s,\ \ t\in [0,T],\\
&H(\vv,r):=\int_0^T\left[\psi(\vv,r)+ \frac{\aa\big(t^{\ff 1 2}\big)^2}{t}+t^{3(\beta-1)}\right]\d t,\\
 &F(\vv,r):=\psi(\vv,r),\ \ \   \vv,r\in [0,1].\end{split}\end{equation}
 Repeating the same procedure by replacing \eqref{LI3} with \eqref{LIs}, we derive \eqref{bal} and \eqref{wts}.
\end{proof}
Finally, we intend to prove Theorem \ref{LHI}.
\beg{proof}[Proof of Theorem \ref{LHI}]
Consider
\begin{align*}\begin{cases}
\d (X^{x,\gamma}_s)^{(1)}=M(X^{x,\gamma}_s)^{(2)}\d s, \\
\d (X^{x,\gamma}_s)^{(2)}=B_s(X^{x,\gamma}_s,P_s^\ast\gamma)\d s+\sigma_s\d W_s,\ \ X^{x,\gamma}_0=x.
\end{cases}
\end{align*}
Recall that $\gamma_s(h)$ is defined in \eqref{gah}.
Let $\tilde{X}_s$ solve
\begin{align}\label{Has}\begin{cases}
\d \tilde{X}_s^{(1)}=M\tilde{X}_s^{(2)}\d s, \\
\d \tilde{X}_s^{(2)}=B_s(X^{x,\gamma}_s,P_s^\ast\gamma)\d s+\sigma_s\d W_s+\gamma'_s(y-x)\d s,\ \ \tilde{X}_0=y.
\end{cases}
\end{align}
Then it holds
\begin{equation*}\tilde{X}_s=X^{x,\gamma}_s+\left( y^{(1)}-x^{(1)}+\int_0^{s}  M\gamma_u(y-x)\d u,\ \ \gamma_s(y-x)\right),\ \
s\in[0,t],\end{equation*}
in particular, $\tilde{X}_t=X^{x,\gamma}_t$ due to \eqref{gah}.
Let
\beg{align*}
&\eta_s^{\gg,\tt\gg}:=\sigma_s^{-1}[B_s(\tilde{X}_s, P_s^\ast\tilde{\gamma})-B_s(X^{x,\gamma}_s,P_s^\ast\gamma)-\gamma'_s(y-x)],\ \ s\in[0,t],\\
&R_t^{\gg,\tt\gg}:= \e^{\int_0^t\<\eta_s^{\gg,\tt\gg},\d W_s\> -\ff 1 2\int_0^t |\eta_s^{\gg,\tt\gg}|^2\d s}.\end{align*}
{\bf(A1)}-{\bf(A2)} imply
\begin{equation}\begin{split}\label{phs}
|\eta_s^{\gg,\tt\gg}|&\leq c_0\bigg(|\gamma_s(y-x)|+|y^{(1)}-x^{(1)}|+\|M\|\int_0^{s}  |\gamma_u(y-x)|\d u\\
&\qquad\quad+\W_2(P_s^\ast\gamma,P_s^\ast\tilde{\gamma})+\W_{\beta,\alpha}(P_s^\ast\gg,P_s^*\tt\gg)+|\gamma'_s(y-x)|\bigg),\ \ s\in[0,t]
\end{split}\end{equation}
for some constant $c_0>0$.
By \eqref{ASS} and  Lemma \ref{DLP}, there exists a constant $c_1>0$  such that
\beg{align*}&\int_0^T \big\{\W_{\beta,\alpha}(P_s^\ast\gg,P_s^*\tt\gg)^2+ \W_2(P_s^\ast\gg,P_s^*\tt\gg)^2\big\}\d s\le c_1   \e^{c_1  \big(1+\|\gg\|_2+\|\tt\gg\|_2\big)^2}\W_2(\gamma,\tilde{\gamma})^2.\end{align*}
This together with \eqref{phs} and \eqref{gammh} gives
\begin{align}\label{pas}
&\int_0^t|\eta_s^{\gg,\tt\gg}|^2\d s\leq\frac{c_2|x-y|^2}{t^3}+c_2   \e^{c_2  \big(1+\|\gg\|_2+\|\tt\gg\|_2\big)^2}\W_2(\gamma,\tilde{\gamma})^2
\end{align}
for some constant $c_2>0$.
As a result, Girsanov's theorem implies that
$$W_s^{\gamma,\tilde{\gamma}}=W_s-\int_0^s\eta_u^{\gg,\tt\gg}\d u,\ \ s\in[0,t]$$
is a $d$-dimensional Brownian motion under the probability measure $\mathbb{Q}_t^{\gamma,\tilde{\gamma}}=R_t^{\gg,\tt\gg}\P$.
So, \eqref{Has} can be rewritten as
\begin{align*}\begin{cases}
\d \tilde{X}_s^{(1)}=M\tilde{X}_s^{(2)}\d s, \\
\d \tilde{X}_s^{(2)}=B_s(\tilde{X}_s,P_s^\ast\tilde{\gamma})\d s+\sigma_s\d W_s^{\gamma,\tilde{\gamma}},\ \ \tilde{X}_0=y,
\end{cases}
\end{align*}
which derives
\begin{align*}
P_t^{\tilde{\gamma}} f(y)&=\E^{\Q_t^{\gg,\tt\gg}} f(\tilde{X}_t)=\E^{\Q_t^{\gg,\tt\gg}} f(X_t^{x,\gamma})=\E [R^{\gg,\tt\gg}_tf(X^{x,\gamma}_t)],\ \ f\in\scr B_b(\R^{m+d}).
\end{align*}
By Young's inequality, we have
\begin{align}\label{loi}\nonumber P_t^{\tilde{\gg}} \log f(y)&\le \log P_t^\gg f(x)+\E(R^{\gg,\tt\gg}_t\log R^{\gg,\tt\gg}_t)\\
&\leq \log P_t^\gg f(x)+\frac{1}{2}\E^{\Q_t^{\gg,\tt\gg}}\int_0^t|\eta_s^{\gg,\tt\gg}|^2\d s,\ \ f\in \B_b^+(\R^{m+d}),f>0.
\end{align}
H\"{o}lder's inequality yields that for any $p>1$,
\begin{align}\label{hap}
\nonumber(P_t^{\tilde{\gamma}}f(y))^p&\leq P_t^{\gamma} f^p(x)(\E (R^{\gamma,\tilde{\gamma}}_t)^{\frac{p}{p-1}})^{p-1}\\
& \leq P_t^\gamma f^p(x)\mathrm{esssup}_{\Omega}\exp\left\{\frac{p}{2(p-1)}\int_0^t |\eta_u^{\gamma,\tilde{\gamma}}|^2\d u\right\},\ \ f\in\scr B^+_b(\R^{m+d}).
\end{align}
Applying \eqref{pas}, taking expectation in \eqref{loi} and \eqref{hap} with respect to any $\pi\in\C(\gamma,\tilde{\gamma})$ and then taking infimum in $\pi\in\C(\gamma,\tilde{\gamma})$, the proof is completed by Jensen's inequality and \eqref{ETX}.
\end{proof}
\section{Well-posedness}
In this section, we consider a general version of \eqref{E0}.
Fix $T>0$ and let $k\geq 1$. Consider the distribution dependent SDEs on $\R^{m+d}$:
\beq\label{PDD}
\begin{cases}
\d X_t^{(1)}=b_t(X_t)\d t, \\
\d X_t^{(2)}=B_t(X_t,\L_{X_t})\d t+\sigma_t(X_t)\d W_t,
\end{cases}
\end{equation}
where $b:[0,T]\times\R^{m+d}\to \mathbb{R}^{m}$, $B:[0,T]\times\R^{m+d}\times \scr P\to \mathbb{R}^{d}$, $\sigma:[0,T]\times\R^{m+d}\to\mathbb{R}^{d}\otimes\mathbb{R}^{d}$ are measurable and $W_t$ is a $d$-dimensional Brownian motion on some complete filtration probability space $(\Omega, \scr F, (\scr F_t)_{t\geq 0},\P)$.
Let $C([0,T];\scr P_k)$ denote the continuous maps from $[0,T]$ to $(\scr P_k,\W_k)$.
\begin{defn}\label{defws} The SDE \eqref{PDD} is called well-posed for distributions in $\scr P_k$, if for any $\F_0$-measurable initial value
$X_0$ with $\L_{X_0}\in \scr P_k$ (respectively any initial distribution $\gamma\in\scr P_k$), it has a unique strong solution (respectively weak solution) such that $\L_{X_\cdot}\in C([0,T];\scr P_k)$.
\end{defn}
We make the following assumptions.
\begin{enumerate}
\item[\bf{(C1)}] For any $t\in[0,T],x\in\R^{m+d}$, $\sigma_t(x)$ is invertible and $\|\sigma^{-1}\|_\infty$ is finite.
\item[\bf{(C2)}] There exists $K>0$ such that
\begin{equation*}\begin{split}
&|b_t(x)-b_t(\bar{x})|+|\sigma_t(x)-\sigma_t(\bar{x})|\leq K|x-\bar{x}|,\\
&|B_t(x,\gamma)-B_t(\bar{x},\bar{\gamma})|\leq K(|x-\bar{x}|+\W_k(\gamma,\bar{\gamma})+\|\gamma-\bar{\gamma}\|_{k,var}),\\
&|b_t(0)|+|\sigma_t(0)|\leq K,\ \ |B_t(0,\gamma)|\leq K(1+\|\gamma\|_k),\ \ x, \bar{x}\in\mathbb{R}^{m+d},\gamma,\bar{\gamma}\in\scr P_k.
\end{split}\end{equation*}
\end{enumerate}
For any $\mu\in C([0,T],\scr P_k)$, consider
\beq\label{PDP}
\begin{cases}
\d X_t^{(1)}=b_t(X_t)\d t, \\
\d X_t^{(2)}=B_t(X_t,\mu_t)\d t+\sigma_t(X_t)\d W_t.
\end{cases}
\end{equation}
Under {\bf (C2)}, for any $\F_0$-measurable random variable $X_0$ with $\L_{X_0}\in\scr P_k$, let $X_t^{X_0,\mu}$ be the unique solution to \eqref{PDP} with initial value $X_0$.
It is standard to derive from {\bf (C2)} that
\begin{align}\label{cod}\E(\sup_{t\in[0,T]}|X_t^{X_0,\mu}|^n|\F_0)\leq c(n)(1+|X_0|^n),\ \ n\geq 1.
\end{align}
Define the mapping $\Phi^{X_0}:C([0,T],\scr P_k)\to C([0,T],\scr P_k)$ as
$$\Phi^{X_0}_t(\mu)=\L_{X_t^{X_0,\mu}},\ \ t\in[0,T].$$
The following theorem provides the well-posedness for \eqref{PDD} and the proof is similar to that in \cite[Theorem 3.2]{FYW2}.
\begin{thm}\label{GWP} Assume {\bf(C1)}-{\bf(C2)}. Then \eqref{PDD} is well-posed in $\scr P_k$. Moreover, there exists a constant $C>0$ such that
\begin{align}\label{pts}\|P_t^\ast\gamma\|_k^k\leq C(1+\|\gamma\|_k^k),\ \ t\in[0,T].
\end{align}
\end{thm}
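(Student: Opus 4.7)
The plan is to apply the Banach fixed point theorem to the map $\Phi^{X_0}$ on $C([0,T];\scr P_k)$ equipped with a suitable complete metric, from which well-posedness of \eqref{PDD} follows; the moment bound \eqref{pts} then comes from a direct It\^o/Gronwall computation on the fixed-point solution. Because \textbf{(C2)} imposes Lipschitz dependence on the measure argument under $\W_k+\|\cdot\|_{k,var}$, the natural working metric is
$$d_\lambda(\mu,\nu):=\sup_{t\in[0,T]}\e^{-\lambda t}\big[\W_k(\mu_t,\nu_t)+\|\mu_t-\nu_t\|_{k,var}\big]$$
with $\lambda>0$ chosen sufficiently large.

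First, I would verify that for each fixed $\mu\in C([0,T];\scr P_k)$ the decoupled SDE \eqref{PDP} is strongly well-posed. Under \textbf{(C1)}--\textbf{(C2)} the coefficients are globally Lipschitz in $x$ with linear growth (the inhomogeneous contribution is bounded by $K(1+\|\mu_s\|_k)$, continuous in $s$), so standard SDE theory delivers the unique strong solution $X_\cdot^{X_0,\mu}$ together with \eqref{cod} and the continuity $\L_{X_\cdot^{X_0,\mu}}\in C([0,T];\scr P_k)$. For two curves $\mu,\nu$, realising $X_\cdot^{X_0,\mu}$ and $X_\cdot^{X_0,\nu}$ on the same Brownian filtration with the same $X_0$ (synchronous coupling) and applying \textbf{(C2)} followed by Gronwall gives
$$\W_k\big(\Phi_t^{X_0}(\mu),\Phi_t^{X_0}(\nu)\big)^k\le C\int_0^t\big[\W_k(\mu_s,\nu_s)^k+\|\mu_s-\nu_s\|_{k,var}^k\big]\d s.$$

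The main obstacle is controlling $\|\Phi_t^{X_0}(\mu)-\Phi_t^{X_0}(\nu)\|_{k,var}$, which is not dominated by any $L^p$-coupling distance. My approach is to invoke Girsanov's theorem with
$$\xi_s:=\sigma_s^{-1}(X_s^{X_0,\mu})\big[B_s(X_s^{X_0,\mu},\nu_s)-B_s(X_s^{X_0,\mu},\mu_s)\big],\quad R_t:=\exp\bigg(\int_0^t\<\xi_s,\d W_s\>-\ff{1}{2}\int_0^t|\xi_s|^2\d s\bigg).$$
By \textbf{(C1)}--\textbf{(C2)} one has the deterministic uniform bound $|\xi_s|\le\|\sigma^{-1}\|_\8 K(\W_k(\mu_s,\nu_s)+\|\mu_s-\nu_s\|_{k,var})$, which is bounded on $[0,T]$ by continuity of $\mu,\nu$. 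Hence $R$ is a true martingale, and under $R_T\P$ the process $X_\cdot^{X_0,\mu}$ solves \eqref{PDP} with $\mu$ replaced by $\nu$, so $\L_{X_\cdot^{X_0,\mu}}|_{R_T\P}=\L_{X_\cdot^{X_0,\nu}}|_\P$. For any $f$ with $|f|\le 1+|\cdot|^k$, Cauchy--Schwarz and \eqref{cod} yield
$$|\E f(X_t^{X_0,\mu})-\E f(X_t^{X_0,\nu})|=|\E[(1-R_t)f(X_t^{X_0,\mu})]|\le\big(\E(1-R_t)^2\big)^{1/2}\big(C(1+\E|X_0|^{2k})\big)^{1/2},$$
and the standard stochastic-exponential estimate $\E(1-R_t)^2\le \mathrm{esssup}_\Omega(\e^{\int_0^t|\xi_s|^2\d s}-1)$ combined with the uniform bound on $|\xi_s|$ produces
$$\|\Phi_t^{X_0}(\mu)-\Phi_t^{X_0}(\nu)\|_{k,var}^2\le C(1+\E|X_0|^{2k})\int_0^t\big[\W_k(\mu_s,\nu_s)^2+\|\mu_s-\nu_s\|_{k,var}^2\big]\d s.$$

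Combining the two estimates and choosing $\lambda$ large makes $\Phi^{X_0}$ a strict contraction on $(C([0,T];\scr P_k),d_\lambda)$; its unique fixed point $\L_{X_\cdot}$, via pathwise uniqueness of the decoupled equation \eqref{PDP}, gives existence and uniqueness of the strong solution of \eqref{PDD}. The moment bound \eqref{pts} follows by applying It\^o's formula to $|X_t|^k$ for the fixed-point solution, using $|B_t(X_t,\L_{X_t})|\le K(|X_t|+1+\|\L_{X_t}\|_k)$ from \textbf{(C2)}, taking expectations, and closing the resulting Gronwall inequality on $\|P_t^\ast\gamma\|_k^k=\E|X_t|^k$. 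The delicate technical point throughout is keeping $R_t$ under quantitative $L^2$-control by means of the deterministic bound on $|\xi_s|$, which is exactly what the Lipschitz dependence of $B$ on the measure variable in the combined distance $\W_k+\|\cdot\|_{k,var}$ is designed to provide.
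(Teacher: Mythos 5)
Your overall strategy (synchronous coupling for the $\W_k$ part, Girsanov for the $\|\cdot\|_{k,var}$ part, Banach fixed point for $\Phi^{X_0}$) is the same as the paper's, but there is a genuine gap in the contraction step. The quantitative $L^2$ control of $R_t-1$ is
\begin{equation*}
\E\big[|R_t-1|^2\,\big|\,\F_0\big]\le \exp\Big\{c\int_0^t\big(\W_k(\mu_s,\nu_s)+\|\mu_s-\nu_s\|_{k,var}\big)^2\d s\Big\}\,c\int_0^t\big(\W_k(\mu_s,\nu_s)+\|\mu_s-\nu_s\|_{k,var}\big)^2\d s,
\end{equation*}
and the exponential prefactor is \emph{not} uniform over $\mu,\nu\in C([0,T];\scr P_k)$: although $|\xi_s|$ is bounded for each fixed pair (as you say, "by continuity of $\mu,\nu$"), the bound grows with $\sup_{t}(\W_k(\mu_t,\nu_t)+\|\mu_t-\nu_t\|_{k,var})$, which is unbounded on the whole space (note $\|\mu_t-\nu_t\|_{k,var}$ is only controlled by the $k$-th moments of $\mu_t,\nu_t$). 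Hence your claimed estimate $\|\Phi_t^{X_0}(\mu)-\Phi_t^{X_0}(\nu)\|_{k,var}^2\le C(1+\E|X_0|^{2k})\int_0^t[\cdots]\d s$ with a constant independent of $\mu,\nu$ does not follow, and consequently no single $\lambda$ makes $\Phi^{X_0}$ a strict contraction on all of $(C([0,T];\scr P_k),d_\lambda)$: the "Lipschitz constant" of $\Phi^{X_0}$ in the variation component blows up along sequences of curves with growing moments.

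The paper repairs exactly this point by localizing: it introduces the sets $\scr P_{k,X_0}^{N,T}=\{\mu: \mu_0=\L_{X_0},\ \sup_{t\in[0,T]}\e^{-Nt}(1+\mu_t(|\cdot|^k))\le N\}$, proves (via \textbf{(C2)} and BDG) that $\Phi^{X_0}$ leaves $\scr P_{k,X_0}^{N,T}$ invariant for $N\ge N_0$, observes that on such a set the Girsanov exponential factor is bounded by a constant $C_0(N)$, and then obtains a strict contraction for the metric $\tilde\W_{k,\lambda(N)}$ with $\lambda(N)$ depending on $N$; letting $N\uparrow\infty$ exhausts $\{\mu\in C([0,T];\scr P_k):\mu_0=\L_{X_0}\}$ and yields existence and uniqueness. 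To complete your argument you would need to add this invariant-subset (or an equivalent a priori moment) step; the remaining ingredients of your proposal --- the $\W_k$ Gronwall estimate, the Girsanov identity $\L_{X^{X_0,\mu}_\cdot}|_{R_T\P}=\Phi^{X_0}(\nu)$, the conditional Cauchy--Schwarz with \eqref{cod}, and the standard derivation of \eqref{pts} --- match the paper's proof.
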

\begin{proof} Since \eqref{pts} is standard by {\bf(C2)} and the BDG inequality, it is sufficient to prove that \eqref{PDD} is well-posed in $\scr P_k$. It follows from {\bf(C2)} that
\begin{align}\label{mnu00}
\nonumber|X^{X_0,\nu}_t-X^{X_0,\mu}_t|^k&\leq C_0\left(\int_0^t[\W_k(\mu_s,\nu_s)+\|\mu_s-\nu_s\|_{k,var}]\d s\right)^k+C_0\int_0^t|X^{X_0,\nu}_s-X^{X_0,\mu}_s|^k\d s\\
&+C_0\left|\int_0^t[\sigma_s(X^{X_0,\nu}_s) -\sigma_s(X_s^{X_0,\mu})]\d W_s\right|^k
\end{align}
for some constant $C_0>0$.
By {\bf(C2)} and the BDG inequality, there exist constants $C_1,C_2>0$ such that
\begin{equation}\begin{split}\label{dif}
&C_0\E\sup_{t\in[0,r]}\left|\int_0^t[\sigma_s(X^{X_0,\mu}_s) -\sigma_s(X_s^{X_0,\nu})]\d W_s\right|^k\\
&\leq C_1\E\left(\int_0^r|X^{X_0,\mu}_s-X_s^{X_0,\nu}|^2\d s\right)^\frac{k}{2}\\
&\leq \frac{1}{2}\E\sup_{t\in[0,r]}|X^{X_0,\mu}_t-X^{X_0,\nu}_t|^k+C_2\E\int_0^r|X^{X_0,\mu}_s-X_s^{X_0,\nu}|^k\d s.
\end{split}\end{equation}
\eqref{dif} together with \eqref{mnu00} and Gronwall's inequality yields
\begin{align*}
\W_{k}(\Phi^{X_0}_t(\mu),\Phi^{X_0}_t(\nu))&\leq(\E\sup_{s\in[0,t]}|X^{X_0,\mu}_s-X_s^{X_0,\nu}|^k)^{\frac{1}{k}}
\leq C_3\int_0^t[\W_k(\mu_s,\nu_s)+\|\mu_s-\nu_s\|_{k,var}]\d s
\end{align*}
for some constant $C_3>0$.
Therefore, for any $\lambda>0$, we have
  \begin{align}\label{Wks}
\sup_{t\in[0,T]}\e^{-\lambda t}\W_{k}(\Phi^{X_0}_t(\mu),\Phi^{X_0}_t(\nu))\leq \frac{C_3}{\lambda }\sup_{t\in[0,T]}\e^{-\lambda t}[\W_k(\mu_t,\nu_t)+\|\mu_t-\nu_t\|_{k,var}].
\end{align}
Next, let
\begin{align*}
\zeta_s&=\sigma_s^{-1}(X_s^{X_0,\mu})(B_s(X_s^{X_0,\mu},\nu_s)-B_s(X_s^{X_0,\mu},\mu_s)), \ \ s\in[0,T],\\
R(t)&=\exp\left\{\int_0^t\<\zeta_s,\d W_s\>-\frac{1}{2}\int_0^t|\zeta_s|^2\d s\right\},\ \ t\in[0,T],\\
W_t^{\mu,\nu}&=W_t-\int_{0}^{t}\zeta_s\d s,\ \ t\in[0,T].
\end{align*}
Then we have
 \begin{equation*} \begin{cases}
\d (X_t^{X_0,\mu})^{(1)}=b_t(X_t^{X_0,\mu})\d t, \\
\d (X_t^{X_0,\mu})^{(2)}=B_t(X_t^{X_0,\mu},\nu_t)\d t+\sigma_t(X_t^{X_0,\mu})\d W^{\mu,\nu}_t.
\end{cases}\end{equation*}
Noting that $|\zeta_s|\leq K\|\sigma^{-1}\|_\infty(\W_k(\mu_s,\nu_s)+\|\mu_s-\nu_s\|_{k,var})$ due to {\bf(C1)}-{\bf(C2)} and \eqref{cod}, Girsanov's theorem yields
 $$\Phi_t^{X_0}(\nu)(f)=\E( R(t)f(X_t^{X_0,\mu})),\ \ f\in\scr B_b(\R^{m+d}), t\in[0,T].$$
Therefore, by the Cauchy-Schwarz inequality for conditional expectation, we obtain
 \begin{align}\label{kva}
\nonumber&\|\Phi_t^{X_0}(\nu)-\Phi_t^{X_0}(\mu)\|_{k,var}\\
&= \E[|R(t)-1|(1+|X_t^{X_0,\mu}|^k)]\\
\nonumber&\leq \E\left([\E(|R(t)-1|^2|\F_0)]^{\frac{1}{2}}[\E((1+|X_t^{X_0,\mu}|^k)^2|\F_0)]^{\frac{1}{2}}\right)
\end{align}
Observe that
 \begin{equation}\begin{split}\label{RRS}
&[\E(|R(t)-1|^2|\F_0)]^{\frac{1}{2}}\\
&\leq \left[\exp\left\{c\int_0^t(\W_k(\mu_s,\nu_s)+\|\mu_s-\nu_s\|_{k,var})^2\d s\right\}-1\right]^{\frac{1}{2}}\\
&\leq \exp\left\{\frac{c}{2}\int_0^t(\W_k(\mu_s,\nu_s)+\|\mu_s-\nu_s\|_{k,var})^2\d s\right\}\\
&\quad\quad\times \sqrt{c}\left(\int_0^t(\W_k(\mu_s,\nu_s)+\|\mu_s-\nu_s\|_{k,var})^2\d s\right)^{\frac{1}{2}}
 \end{split}\end{equation}
 for some constant $c>0$.
For any $N\geq 1$, let
\begin{equation}\label{dds}\scr P_{k,X_0}^{N,T}=\{\mu\in C([0,T],\scr P_k),\mu_0:=\L_{X_0}, \sup_{t\in[0,T]}\e^{-Nt}(1+\mu_t(|\cdot|^k))\leq N\}.
\end{equation}
Then it is clear that as $N\uparrow\infty$,
$$\scr P_{k,X_0}^{N,T}\uparrow\scr P_{k,X_0}^{T}=\{\mu\in C([0,T],\scr P_k),\mu_0=\L_{X_0}\}.$$
So, it remains to prove that there exists a constant $N_0>0$ such that for any $N\geq N_0$, $\Phi^{X_0}$ is a contractive map on $\scr P_{k,X_0}^{N,T}$.

Firstly, it follows from {\bf (C2)} and the BDG inequality that there exists a constant $c_1>0$ such that for any $\mu\in \scr P_{k,X_0}^{N,T}$,
\begin{align*}
\e^{-Nt}\E(1+|X_t^{X_0,\mu}|^k)&\leq \E(1+|Z_0|^k)+c_1\e^{-Nt}\int_0^t\E(1+|Z_s^{Z_0,\mu}|^k)\d s\\
&\qquad\quad+c_1\e^{-Nt}\int_0^t(1+\mu_s(|\cdot|^k))\d s\\
&\leq \E(1+|Z_0|^k)+\frac{c_1}{N}\sup_{s\in[0,t]}\e^{-Ns}\E(1+|Z_s^{Z_0,\mu}|^k)+c_1.
\end{align*}
Aa a result, there exists a constant $N_0>1$ such that for any $N\geq N_0$, $\Phi^{X_0}$ maps $\scr P_{k,X_0}^{N,T}$ to $\scr P_{k,X_0}^{N,T}$.
Next, we derive from \eqref{kva} and \eqref{RRS} that
  \begin{align*}
&\|\Phi_t^{X_0}(\nu)-\Phi_t^{X_0}(\mu)\|_{k,var}\leq C_0(N)\left(\int_0^t(\W_k(\mu_s,\nu_s)+\|\mu_s-\nu_s\|_{k,var})^2\d s\right)^{\frac{1}{2}},\ \ \mu,\nu\in \scr P_{k,X_0}^{N,T}
\end{align*}
for some constant $C_0(N)>0$, which implies that
\beq\label{ESTM'}\beg{split}
\sup_{t\in[0,T]}\e^{-\lambda t}\|\Phi_t^{X_0}(\nu)-\Phi_t^{X_0}(\mu)\|_{k,var}
&\leq \frac{C(N)}{\sqrt{\lambda}}\tilde{\W}_{k,\lambda}(\mu,\nu),\end{split}
 \end{equation}
 here for any $\lambda>0$,
$$\tilde{\W}_{k,\lambda}(\mu,\nu):= \sup_{t\in [0,T]} \e^{-\lambda t}(\|\nu_t-\mu_t\|_{k,var}+\W_k(\mu_t,\nu_t)), \ \ \mu,\nu\in \scr P_{k,X_0}^{T}.$$
Combining \eqref{ESTM'} with \eqref{Wks}, we conclude that for any $N\geq N_0$, there exists a constant $\lambda(N)>0$ such that $\Phi^{X_0}$ is a strictly contractive map on $(\scr P_{k,X_0}^{N,T},\tilde{\W}_{k,\lambda(N)})$. Therefore, the proof is completed by the Banach fixed point theorem and \eqref{dds}.

\end{proof}
\beg{thebibliography}{99}

\bibitem{C} P. E. Chaudru de Raynal, \emph{Strong well-posedness of McKean-Vlasov stochastic differential equation with H\"{o}lder drift,} Stochastic Process. Appl. 130(2020), 79-107.

\bibitem{CN} P.-E. Chaudry De Raynal, N. Frikha, \emph{Well-posedness for some non-linear SDEs and related PDE on the Wasserstein space}, J. Math. Pures Appl. 159(2022), 1-167.




\bibitem{FF} E. Fedrizzi, F. Flandoli, E. Priola, J. Vovelle, \emph{Regularity of stochastic kinetic equations,} Electron. J. Probab. 22(2017), 1-48.



\bibitem{GW} A. Guillin, F.-Y. Wang, \emph{Degenerate Fokker-Planck equations: Bismut formula, gradient estimate and Harnack inequality,} J. Differential Equations 253(2012), 20-40.





\bibitem{HS} X. Huang, Y. Song, \emph{Well-posedness and regularity for distribution dependent SPDEs with singular drifts,} Nonlinear Anal. 203(2021), 112167.



\bibitem{HW18} X. Huang, F.-Y. Wang, \emph{Distribution dependent SDEs with singular coefficients,} Stochastic Process. Appl. 129(2019), 4747-4770.

\bibitem{HWJMAA} X. Huang, F.-Y. Wang, \emph{Singular McKean-Vlasov (reflecting) SDEs with distribution dependent noise,} J. Math. Anal. Appl. 514(2022), 126301 21pp.

\bibitem{HW22a} X. Huang, F.-Y. Wang, \emph{Log-Harnack Inequality and Bismut Formula for Singular McKean-Vlasov SDEs,} arXiv:2207.11536.




\bibitem{M} H. P. McKean, \emph{A class of Markov processes associated with nonlinear parabolic equations,} Proc Natl Acad Sci U S A, 56(1966), 1907-1911.

\bibitem{MV} Yu. S. Mishura, A. Yu. Veretennikov, \emph{Existence and uniqueness theorems for solutions of McKean-Vlasov stochastic equations,} Theor. Probability and Math. Statist. 103(2020), 59-101.

\bibitem{Pin} M. S. Pinsker, \emph{Information and Information Stability of Random Variables and Processes,} Holden-Day, San Francisco, 1964.

\bibitem{RW}  P. Ren, F.-Y. Wang, \emph{Exponential convergence in entropy and Wasserstein for McKean-Vlasov SDEs,}  Nonlinear Anal. 206(2021), 112259.
\bibitem{RZ} M. R\"{o}ckner, X. Zhang, \emph{Well-posedness of distribution dependent SDEs with singular drifts,} Bernoulli 27(2021), 1131-1158.



\bibitem{V} C. Villani,  \emph{Hypocoercivity,} Mem. Amer. Math. Soc. 202(2009).

\bibitem{Wbook} F.-Y. Wang, \emph{Harnack Inequality for Stochastic Partial Differential Equations,} Springer, New York, 2013.

\bibitem{FYW1} F.-Y. Wang, \emph{Distribution-dependent SDEs for Landau type equations,} Stochastic Process. Appl. 128(2018), 595-621.

\bibitem{W2} F.-Y. Wang, \emph{Hypercontractivity and Applications for Stochastic Hamiltonian Systems,} J. Funct. Anal. 272(2017), 5360-5383.

\bibitem{FYW2} F.-Y. Wang, \emph{Distribution dependent reflecting stochastic differential equations,} arXiv:2106.12737.

\bibitem{WZ1} F.-Y. Wang, X. Zhang, \emph{Derivative formula and applications for degenerate diffusion semigroups,} J. Math. Pures Appl. 99(2013),726-740.





\bibitem{Z1} X. Zhang, \emph{Stochastic flows and Bismut formulas for stochastic Hamiltonian systems,} Stochastic Process. Appl. 120(2010), 1929-1949.



\bibitem{Z5} X. Zhang, \emph{Second order McKean-Vlasov SDEs and kinetic Fokker-Planck-Kolmogorov equations,} arXiv:2109.01273.

\end{thebibliography}

\end{document}